\documentclass[reqno]{amsart}
\usepackage{amsmath}
\usepackage{amssymb, hyperref}
\newcommand{\R}{\mathbb{R}}
\newcommand{\C}{\mathbb{C}}

\usepackage{graphicx,color}
\setlength{\textwidth}{15cm} \setlength{\textheight}{22 cm}
\addtolength{\oddsidemargin}{-1.5cm} \addtolength{\evensidemargin}{-1.5cm}
\newcommand{\Real}{\mathbb R}
\begin{document}
\title[On the IBNLS equation]{Some remarks on the inhomogeneous biharmonic NLS equation}
	\author[C. M. GUZM\'AN ]
	{CARLOS M. GUZM\'AN }  
	
	\address{CARLOS M. GUZM\'AN \hfill\break
		Department of Mathematics, Fluminense Federal University, BRAZIL}
	\email{carlos.guz.j@gmail.com}
	
    \author[A. PASTOR ]
	{ADEMIR PASTOR}  
	
	\address{ADEMIR PASTOR  \hfill\break
	Imecc-Unicamp,	Rua S\'ergio Buarque de Holanda, 651, 13083-859, Campinas--SP, Brazil.}
	\email{apastor@ime.unicamp.br}

\begin{abstract}
We consider the inhomogeneous biharmonic nonlinear Schr\"odinger equation 
$$
i u_t +\Delta^2 u+\lambda|x|^{-b}|u|^\alpha u = 0, 
$$
where $\lambda=\pm 1$ and $\alpha$, $b>0$. In the subctritical case, we improve the global well-posedness result obtained in \cite{GUZPAS} for dimensions $N=5,6,7$ in the Sobolev space $H^2(\mathbb{R}^N)$. The fundamental tools to establish our results are the standard Strichartz estimates related to the linear problem and the Hardy-Littlewood inequality. Results concerning the energy-critical case, that is, $\alpha=\frac{8-2b}{N-4}$ are also reported. More precisely, we show well-posedness and a
stability result with initial data in the critical space $\dot{H}^2$.
\end{abstract}

\keywords{Inhomogeneous biharmonic nonlinear Schr\"odinger equation;  Global well-posedness; Critical nonlinearity; Stability theory}
	
\maketitle  
\numberwithin{equation}{section}
	\newtheorem{theorem}{Theorem}[section]
	\newtheorem{proposition}[theorem]{Proposition}
	\newtheorem{lemma}[theorem]{Lemma}
	\newtheorem{corollary}[theorem]{Corollary}
	\newtheorem{remark}[theorem]{Remark}
	\newtheorem{definition}[theorem]{Definition}
	
\section{\bf Introduction}

This paper deals with the initial-value problem (IVP) associated to the inhomogeneous biharmonic nonlinear Schr\"odinger equation (IBNLS for short)
\begin{equation}\label{IBNLS}
\begin{cases}
i\partial_tu +\Delta^2 u + \lambda|x|^{-b} |u|^\alpha u =0,  \;\;\;t\in \mathbb{R} ,\;x\in \mathbb{R}^N,\\
u(0,x)=u_0(x), 
\end{cases}
\end{equation}
where $\Delta^2$ denotes the biharmonic operator, $u = u(t,x)$ is a complex-valued function in space-time  $\mathbb{R}\times\mathbb{R}^N$, $\alpha, b>0$ and $\lambda$ is a real constant that may be normalized to  $\pm 1$. The equation in \eqref{IBNLS} is an inhomogeneous version of the well known  biharmonic nonlinear Schr\"odinger equation (BNLS) (also termed fourth-order Schr\"odinger equation), see \cite{FIBI1}. Moreover, the equation is called ``focusing IBNLS" when $\lambda= -1$ and ``defocusing IBNLS" when $\lambda= 1$. 

To the best of our knowledge, from the mathematical viewpoint, the study of well-posedness for fourth-order Schr\"odinger-type equations with inhomogeneous nonlinearities was initiated in \cite{Cho-Ozawa}, where the authors considered the inhomogeneous power $|x|^{-2}|u|^{\frac{4}{N}}u$ and established  results of local well-posedness, and blow up in finite time for initial data with negative energy and radial symmetry. Concerning more general power-like nonlinearities of the form $|x|^{-b}|u|^{\alpha}u$, very recently,  in \cite{GUZPAS}, the authors proved several well-posedness results in $L^2(\mathbb{R}^N)$ and $H^2(\mathbb{R}^N)$. Local well-posedness in the energy space has also appeared in \cite{liuzhang2021}. Global well-posedness for large initial data in the energy space and concentration in the critical Lebesgue space were proved in \cite{carguzpas2020}.  In \cite{saanouni2021} the author established a scattering result in the energy space.

Before describing our result we first observe that  the IBNLS equation is invariant by scaling in the following sense: if $u$ is a solution of \eqref{IBNLS} then the scaled function 
$u_\mu(t,x)=\mu^{\frac{4-b}{\alpha}}u(\mu^4 t,\mu x)$,  $\mu >0$, is also a solution of \eqref{IBNLS} with initial data $u_{0,\mu}(x)=\mu^{\frac{4-b}{\alpha}}u_0(\mu x)$. In addition, it is easily seen that
$$
\|u_{0,\mu}\|_{\dot{H}^s}=\mu^{s-\frac{N}{2}+\frac{4-b}{\alpha}}\|u_0\|_{\dot{H}^s},
$$
which means that the Sobolev space invariant by the above scaling is $\dot{H}^{s_c}(\mathbb{R}^N)$, with $s_c=\frac{N}{2}-\frac{4-b}{\alpha}$. Thus, as usually termed in dispersive equations, the case $s_c = 0$ (equivalently $\alpha = \frac{8-2b}{N}$) is said to be mass-critical or $L^2$-critical since at this level the equation preserves the mass given by
\begin{equation*}
M[u(t)]=\int_{\mathbb{R}^N}|u(t,x)|^2dx.
\end{equation*}
Also, the case
$s_c=2$ (equivalently $\alpha =\frac{8-2b}{N-4}$) is said to be energy-critical or $\dot{H}^2$-critical because the equation conserves the energy
\begin{equation*}
E[u(t)]=\frac{1}{2}\int_{\mathbb{R}^N}| \Delta u(t,x)|^2dx+\frac{\lambda}{\alpha +2} \int_{\mathbb{R}^N} |x|^{-b}|u|^{\alpha +2}dx.
\end{equation*}
Finally, in the case $0<s_c<2$ the equation is said to be mass-supercritical and energy-subcritical (or intercritical). This case is equivalent to  $\frac{8-2b}{N}<\alpha<4^*$, where 
\begin{equation}\label{def4*}
 4^*:=
 \begin{cases}
 \frac{8-2b}{N-4},\;\;\mbox{if}\;\;N\geq 5,\\
 +\infty,\;\;\mbox{if}\;\;1\leq N\leq 4.
 \end{cases}
\end{equation}

Now we are able to describe our results and to motivate the present work. In \cite{GUZPAS}, for $N\geq 3$, we showed local well-posedness in $H^2(\mathbb{R}^N)$ with $0<b<\min\{\frac{N}{2},4\}$ and $\min\left\{\frac{2(1-b)}{N},0\right\}<\alpha<4^*$. 
Moreover, we also established global well-posedness in the mass-subcritical and mass-critical cases, that is, $\min\left\{\frac{2(1-b)}{N},0\right\}<\alpha\leq {\frac{8-2b}{N}}$. These global results were proved as an application of a Gagliardo-Nirenberg-type inequality,  taking account the conservation of the mass and the energy. In the intercritical case, we proved the small data global existence under some restrictions on the parameters $b$, $\alpha$ and $N$. More specifically, the following result was proved.

\vskip.2cm

\noindent {\bf Theorem A.} {\it
Assume one of the following conditions:
\begin{itemize}
\item[(i)] $N\geq8$, $0<b<4$, and $\frac{8-2b}{N}<\alpha<4^*$;
\item[(ii)] $N=5,6,7$, $\frac{8-2b}{N}<\alpha<\frac{N-2b}{N-4}$ and $0<b<\frac{N^2-8N+32}{8}$;
\item[(iii)] $N=6,7$, $0<b<N-4$, and	$\frac{8-2b}{N}<\alpha<4^*$;
\item[(iv)] $N=3,4$, $0<b<\frac{N}{2}$, and $\frac{8-2b}{N}<\alpha<\infty$.
\end{itemize}	
Suppose $u_0 \in H^2(\mathbb{R}^N)$ satisfies $\|u_0\|_{H^2}\leq \eta$, for some $\eta>0$. Then there exists $\delta=\delta(\eta)>0$ such that if $\|e^{it\Delta^2}u_0\|_{B(\dot{H}^{s_c})}<\delta$, then  there exists a unique global solution $u$ of \eqref{IBNLS}.
}

\vskip.2cm

Note that in dimensions $N=5,6,7$, Theorem A does not cover the range for $\alpha$ and $b$ where the local well-posedness was established, that is,  $\frac{8-2b}{N}<\alpha<4^*$ and  $0<b<\min\{\frac{N}{2},4\}$. So, in this paper, our first interest is to improve these global results by extending the range of the parameter $\alpha$. Our first main result the following.

\begin{theorem}\label{GWPH2}
Assume $N\geq3$, $0<b<\min\{\frac{N}{2},4\}$, and $\frac{8-2b}{N}<\alpha<4^*$ $(\frac{8-2b}{N}<\alpha<7-2b$ if $N=5)$. If $u_0 \in H^2(\mathbb{R}^N)$ satisfies $\|u_0\|_{H^2}\leq \eta$, for some $\eta>0$. Then there exists $\delta=\delta(\eta)>0$ such that if $\|e^{it\Delta^2}u_0\|_{B(\dot{H}^{s_c})}<\delta$, then  there exists a unique global solution $u$ of \eqref{IBNLS} such that
\begin{equation*}*\label{NGWP3}
\|u\|_{B(\dot{H}^{s_c})}\leq  2\|e^{it\Delta^2}u_0\|_{B(\dot{H}^{s_c})}\qquad \textnormal{and}\qquad \|u\|_{B\left(L^2\right)}+\|\Delta  u\|_{B\left(L^2\right)}\leq 2c\|u_0\|_{H^2}.
\end{equation*}
for some universal constant $c>0$.
\end{theorem}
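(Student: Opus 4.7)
The plan is to run a Banach fixed-point argument on the Duhamel map
\begin{equation*}
\Phi(u)(t):=e^{it\Delta^2}u_0-i\lambda\int_0^t e^{i(t-s)\Delta^2}\bigl(|x|^{-b}|u|^\alpha u\bigr)(s)\,ds
\end{equation*}
on the closed set
\begin{equation*}
X:=\Bigl\{u:\ \|u\|_{B(\dot H^{s_c})}\le 2\|e^{it\Delta^2}u_0\|_{B(\dot H^{s_c})},\ \|u\|_{B(L^2)}+\|\Delta u\|_{B(L^2)}\le 2c\|u_0\|_{H^2}\Bigr\},
\end{equation*}
endowed with the metric $d(u,v)=\|u-v\|_{B(\dot H^{s_c})}+\|u-v\|_{B(L^2)}$. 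The announced bounds follow immediately if we can verify that, for $\eta$ and $\delta$ sufficiently small, $\Phi$ sends $X$ into itself and is a contraction on $X$.

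The first step is to invoke the Strichartz inequalities for the linear fourth-order propagator to bound
\begin{equation*}
\|\Phi(u)\|_{B(\dot H^{s_c})}\le \|e^{it\Delta^2}u_0\|_{B(\dot H^{s_c})}+c\,\bigl\||x|^{-b}|u|^\alpha u\bigr\|_{N(\dot H^{s_c})},
\end{equation*}
and, analogously, $\|\Phi(u)\|_{B(L^2)}+\|\Delta\Phi(u)\|_{B(L^2)}\le c\|u_0\|_{H^2}$ plus the $N(L^2)$-norms of $|x|^{-b}|u|^\alpha u$ and $\Delta(|x|^{-b}|u|^\alpha u)$. Everything then reduces to establishing the two nonlinear estimates
\begin{equation*}
\bigl\||x|^{-b}|u|^\alpha u\bigr\|_{N(\dot H^{s_c})}\lesssim \|u\|_{B(\dot H^{s_c})}^{\alpha+1},
\end{equation*}
\begin{equation*}
\bigl\||x|^{-b}|u|^\alpha u\bigr\|_{N(L^2)}+\bigl\|\Delta(|x|^{-b}|u|^\alpha u)\bigr\|_{N(L^2)}\lesssim \|u\|_{B(\dot H^{s_c})}^{\alpha}\bigl(\|u\|_{B(L^2)}+\|\Delta u\|_{B(L^2)}\bigr),
\end{equation*}
together with their Lipschitz-type counterparts on differences $u-v$.

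To prove these, I would split $\mathbb R^N=\{|x|\le 1\}\cup\{|x|>1\}$ and treat the two regions with different pairs of Hölder exponents. On the exterior region the weight $|x|^{-b}$ is bounded and does not contribute; on the interior region it is controlled via the Hardy–Littlewood–Sobolev inequality, combined with Sobolev embeddings of the form $\dot H^{s_c}\hookrightarrow L^{p}$ or $\dot W^{2,q}\hookrightarrow L^{p}$. The Strichartz pairs used for the $B(\dot H^{s_c})$ and $B(L^2)$ norms are then chosen so that all scaling identities and admissibility conditions match. This spatial splitting is precisely what allows the parameter range to be extended beyond that of Theorem A, where a single global Hardy-type bound forced additional restrictions.

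The main obstacle will be the simultaneous verification, in dimensions $N=5,6,7$, that the Hölder/Strichartz indices produced by this splitting remain biharmonic admissible over the full advertised range of $\alpha$. The upper endpoint $\alpha<7-2b$ in dimension $N=5$ is expected to appear at exactly this stage: in the $N(L^2)$ estimate on the interior region, the natural dual Strichartz exponent produced by the Hardy–Littlewood–Sobolev step fails to be admissible once $\alpha$ exceeds $7-2b$, and there is no alternative admissible pair to substitute. Once these estimates are in hand, choosing $\eta,\delta$ so that $c(2c\eta)^\alpha+c(2\delta)^\alpha\le 1/2$ closes the contraction and yields the unique solution satisfying the stated bounds.
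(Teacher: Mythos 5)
Your overall architecture (Duhamel map, contraction on a ball cut out by the two Strichartz norms, reduction to nonlinear estimates, the $B$/$B^{C}$ splitting with Hardy--Littlewood on the interior region) matches the paper's. The genuine gap is in how you propose to close the $\|\Delta u\|_{B(L^2)}$ bound: you plan to estimate $\Delta(|x|^{-b}|u|^\alpha u)$ in a dual Strichartz norm, i.e.\ to put two derivatives on the nonlinearity. The pointwise expansion of $\Delta(|u|^\alpha u)$ produces the term $|u|^{\alpha-1}|\nabla u|^2$, which the H\"older/Gagliardo--Nirenberg machinery only handles when $\alpha\geq 1$; but the theorem's range contains $\alpha<1$ (e.g.\ $N=7$, $b=3$ gives $\alpha\in(2/7,2/3)$, and similarly for $N=6$). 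This is exactly why the paper's Theorem \ref{GWPH2'}, which \emph{does} estimate $\Delta F$ in $B'(L^2)$ (Lemma \ref{LG3}(ii)), needs the extra hypothesis $b\leq\tfrac32$ forcing $\alpha>1$. Theorem \ref{GWPH2} avoids the issue entirely via the derivative-gaining inhomogeneous Strichartz estimate \eqref{EstimativaImportante}, $\|\Delta u\|_{L^q_IL^r_x}\lesssim\|\Delta u(t_0)\|_{L^2}+\|\nabla F\|_{L^2_IL^{2N/(N+2)}_x}$, so that only \emph{one} derivative ever falls on $|x|^{-b}|u|^\alpha u$; the entire new content is the bound on $\|\nabla F\|_{L^2_tL^{2N/(N+2)}_x}$ in Lemma \ref{improvedlemma}, and the restriction $\alpha<7-2b$ at $N=5$ arises there, from the admissibility requirement $r<\tfrac{2N}{N-4}=10$ on the $\dot H^{s_c}$-pair $(a,r)$ --- not in a $\Delta F$ estimate as you conjecture. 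Without this smoothing estimate your scheme does not close on the stated range.

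A secondary problem: the homogeneous bound $\||x|^{-b}|u|^\alpha u\|_{N(\dot H^{s_c})}\lesssim\|u\|_{B(\dot H^{s_c})}^{\alpha+1}$ cannot come out of the splitting you describe. Matching scaling using only $\dot H^{s_c}$-admissible exponents forces $\tfrac{N}{\gamma}=b$ exactly, so $\||x|^{-b}\|_{L^\gamma}$ diverges on both $B$ and $B^{C}$. One must peel off a small power $\theta$ of $u$ measured in $L^\infty_tH^2_x$ (as in Lemma \ref{lemmaglobal1} and \eqref{LGr2}), which tilts $\tfrac{N}{\gamma}-b$ to a strictly positive value on $B$ and a strictly negative one on $B^{C}$; this is also why the hypothesis $\|u_0\|_{H^2}\leq\eta$ must enter the contraction constants. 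Your final smallness condition does involve $\eta$, so this fix is compatible with your plan, but the reduction estimates as you displayed them are not provable.
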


Observe that in dimension $N=5$ we have the extra  restriction   $\alpha<7-2b$ (instead of $\alpha<8-2b$), which still leaves a lack in the range of $\alpha$. This restriction seems to be technical and appears in the proof of our nonlinear estimates (see Lemma \ref{improvedlemma}). However, if we insist with the assumption $\frac{8-2b}{5}<\alpha<8-2b$, then we need to restrict the range of $b$.

\begin{theorem}\label{GWPH2'}
Let $N=5$, $0<b\leq\frac{3}{2}$ and $\frac{8-2b}{5}<\alpha<8-2b$. If $u_0 \in H^2(\mathbb{R}^N)$ satisfies $\|u_0\|_{H^2}\leq \eta$, for some $\eta>0$. Then there exists $\delta=\delta(\eta)>0$ such that if $\|e^{it\Delta^2}u_0\|_{L^{a^*}_tL^{r^*}_x}<\delta$, then  there exists a unique global solution $u$ of \eqref{IBNLS} such that
\begin{equation*}
\|u\|_{L^{a^*}_tL^{r^*}_x}\leq  2\|e^{it\Delta^2}u_0\|_{L^{a^*}_tL^{r^*}_x}\quad \textnormal{and}\quad \|u\|_{B\left(L^2\right)}+\|\Delta  u\|_{B\left(L^2\right)}\leq 2c\|u_0\|_{H^2},
\end{equation*}
for some universal constant $c>0$,  where
\begin{align}\label{pairN=5a}
a^*\;=\;\frac{8\alpha(\alpha+1-\theta)}{8-2b-\alpha(N-4)+2\varepsilon\alpha}\quad\textnormal{and}\quad r^*=\frac{2\alpha N(\alpha+1-\theta)}{\alpha(N+4-2b)-2\theta (4-b)-2\varepsilon \alpha},
\end{align}
with $\theta$ and $\varepsilon$ sufficiently small numbers.
\end{theorem}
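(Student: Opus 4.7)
The plan is to run a contraction-mapping argument on a complete metric space tailored to the specific Strichartz pair $(a^*, r^*)$ in \eqref{pairN=5a}. I would work in
\begin{equation*}
X := \left\{u : \|u\|_{L^{a^*}_t L^{r^*}_x} \le 2\|e^{it\Delta^2}u_0\|_{L^{a^*}_t L^{r^*}_x},\;\; \|u\|_{B(L^2)} + \|\Delta u\|_{B(L^2)} \le 2c\|u_0\|_{H^2}\right\},
\end{equation*}
equipped with an appropriate metric, and consider the Duhamel map
\begin{equation*}
\Phi(u)(t) = e^{it\Delta^2}u_0 - i\lambda \int_0^t e^{i(t-s)\Delta^2}|x|^{-b}|u(s)|^{\alpha}u(s)\,ds.
\end{equation*}
The linear piece is controlled by Strichartz, so everything reduces to a dual-Strichartz estimate on the nonlinearity $F(u) := |x|^{-b}|u|^\alpha u$.

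Before touching the nonlinearity, I would verify that the pair $(a^*, r^*)$ is biharmonic $\dot{H}^{s_c}$-admissible for $N=5$, and that the auxiliary pair arising on the dual side is $L^2$-admissible. This is the step that pins down why the hypotheses $b \le 3/2$ and $\alpha < 8-2b$ appear: the admissibility identities, together with the positivity of the exponents produced after H\"older, only close up in this range (with the freedom gained by introducing small parameters $\theta, \varepsilon > 0$).

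The main technical obstacle is the nonlinear estimate itself, which I would establish following the scheme of Lemma \ref{improvedlemma} and is the reason Theorem \ref{GWPH2'} is split off from Theorem \ref{GWPH2}. Split $\mathbb{R}^5 = B \cup B^c$ with $B = \{|x|\le 1\}$. On $B^c$ the weight $|x|^{-b}$ is bounded and one recovers estimates analogous to the homogeneous biharmonic NLS. On $B$, apply H\"older in space (and then in time) writing $|x|^{-b}|u|^{\alpha}u$ as a product of $|x|^{-b}$ times powers of $u$ in $L^{r^*}$ and in an $L^2$-Strichartz norm; the exponent $\theta$ controls how one interpolates between $B(L^2)$ and $B(\dot{H}^2)$, while $\varepsilon$ creates the slack needed so that the Lebesgue exponent $q$ sitting against $|x|^{-b}$ satisfies $bq < N = 5$ (which is where $b \le 3/2$ plays its role). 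Sobolev embedding $H^2 \hookrightarrow L^{r^*}$ in dimension five then closes the loop, producing a bound of the form
\begin{equation*}
\|F(u)\|_{\text{dual Strichartz}} \le C\,\|u\|_{L^{a^*}_t L^{r^*}_x}^{\alpha}\bigl(\|u\|_{B(L^2)} + \|\Delta u\|_{B(L^2)}\bigr).
\end{equation*}

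With this nonlinear estimate in hand, choosing $\eta$ small and then $\delta = \delta(\eta)$ small makes $\Phi$ map $X$ into itself and become a strict contraction; uniqueness, continuous dependence, and the stated bounds follow. The same estimate applied to the difference $\Phi(u) - \Phi(v)$ yields contraction, so the fixed point is the desired global solution. I expect the delicate part to be the bookkeeping of the exponents in H\"older's inequality on the ball $B$: one has to keep $\theta$ and $\varepsilon$ simultaneously small enough to satisfy admissibility and Hardy-type integrability, yet positive enough that Sobolev embedding provides nontrivial control — exactly the balancing act that fails when $b > 3/2$ or $\alpha \ge 8-2b$.
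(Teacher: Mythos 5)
Your overall contraction set-up (the space $X$, the Duhamel map, smallness of $\delta$) matches the paper, but there is a genuine gap at the step you describe as ``verify that the pair $(a^*,r^*)$ is biharmonic $\dot H^{s_c}$-admissible for $N=5$.'' That verification fails: $(a^*,r^*)$ satisfies the scaling relation $\frac{4}{a^*}=\frac{N}{2}-\frac{N}{r^*}-s_c$ but $r^*$ falls outside the admissible window \eqref{HsAdmissivel}, which is exactly why Theorem \ref{GWPH2'} is stated in terms of $L^{a^*}_tL^{r^*}_x$ rather than $B(\dot H^{s_c})$ and why the range $7-2b\le\alpha<8-2b$ escapes Theorem \ref{GWPH2}. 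Consequently you cannot close the $L^{a^*}_tL^{r^*}_x$ bound on $\Phi(u)$ by applying the inhomogeneous Strichartz estimate directly into that norm. The missing idea is to introduce an auxiliary $B$-admissible pair $(a^*,p^*)$ with $s_c=\frac{5}{p^*}-\frac{5}{r^*}$, use the Sobolev embedding $\dot H^{s_c,p^*}\hookrightarrow L^{r^*}$ in $x$, and then estimate $\|D^{s_c}F(x,u)\|_{B'(L^2)}$. That fractional-derivative estimate (Lemma \ref{LG3}(iii)) is obtained by interpolating between $\|F\|_{B'(L^2)}$ and $\|\Delta F\|_{B'(L^2)}$ via the fractional Gagliardo--Nirenberg inequality, so you also need the second-order estimate on $\Delta(|x|^{-b}|u|^\alpha u)$, which is absent from your scheme (you model the argument on Lemma \ref{improvedlemma}, which only controls $\nabla F$ and does not suffice here).

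Relatedly, your explanation of the hypothesis $b\le\tfrac32$ is off: it is not about the integrability of $|x|^{-b}$ against a H\"older exponent (that constraint is already handled by $b<\tfrac N2$ and the $\varepsilon,\theta$ slack). It comes from the pointwise bound $|\Delta(|u|^\alpha u)|\lesssim |u|^\alpha|\Delta u|+|u|^{\alpha-1}|\nabla u|^2$, which requires $\alpha>1$; since the theorem only assumes $\alpha>\frac{8-2b}{5}$, one forces $\frac{8-2b}{5}\ge 1$, i.e.\ $b\le\tfrac32$. Without identifying the need for the $\Delta F$ estimate, the role of this hypothesis cannot be located, and the argument as proposed does not close.
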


The restriction $b\leq\frac{3}{2}$ comes from the fact that wee need $\alpha > 1$ to prove our nonlinear estimates (see Lemma \ref{LG3}), which forces $\frac{8-2b}{5}\geq1$ or, equivalently, $b\leq\frac{3}{2}$. Of course, if $\alpha>1$ then we do not need such a restriction and we may establish the following.

\begin{corollary}
If $N=5$, $0<b<\frac{5}{2}$ and  $\min\{1,\frac{8-2b}{5}\} <  \alpha< 8-2b$ then the same conclusion of Theorem \ref{GWPH2'} holds.
\end{corollary}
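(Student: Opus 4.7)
The proof follows the scheme used to establish Theorem \ref{GWPH2'} verbatim, with only one point of attention: isolating where the restriction $b\leq \frac{3}{2}$ entered that argument and verifying that it is no longer needed. As the remark preceding the corollary indicates, the bound $b\leq \frac{3}{2}$ arises exclusively in the application of Lemma \ref{LG3} in the nonlinear estimate, where the condition $\alpha>1$ is required. In Theorem \ref{GWPH2'} that inequality was obtained as a consequence of the intercritical condition $\alpha>\frac{8-2b}{5}$, which forces $\frac{8-2b}{5}\geq 1$, i.e., $b\leq\frac{3}{2}$. Here, by hypothesis, $\alpha>1$ is available directly, so the appeal to Lemma \ref{LG3} succeeds without any upper bound on $b$ beyond the local well-posedness range $0<b<\min\{N/2,4\}=\frac{5}{2}$.

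The plan is therefore to run a contraction mapping argument on the same complete metric space used in Theorem \ref{GWPH2'}, namely a small ball in the space determined by the Strichartz-type norms $L^{a^*}_tL^{r^*}_x$, $B(L^2)$ and $\|\Delta u\|_{B(L^2)}$, with the pair $(a^*,r^*)$ given by \eqref{pairN=5a}. One controls the free evolution by the standard Strichartz estimates, bounds the Duhamel term associated with the nonlinearity $|x|^{-b}|u|^\alpha u$ via Lemma \ref{LG3} together with the Hardy--Littlewood inequality, and closes the estimates under the smallness hypotheses $\|u_0\|_{H^2}\leq \eta$ and $\|e^{it\Delta^2}u_0\|_{L^{a^*}_tL^{r^*}_x}<\delta$. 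The resulting fixed point produces the desired global solution with the bounds announced in Theorem \ref{GWPH2'}.

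The only place in the proof of Theorem \ref{GWPH2'} that genuinely required $b\leq \frac{3}{2}$ is the invocation of Lemma \ref{LG3}; since $\alpha>1$ is now imposed directly, this step succeeds for every $0<b<\frac{5}{2}$, and the remainder of the argument carries through unchanged. The exponents in \eqref{pairN=5a} were chosen precisely so that the admissibility conditions entering the Strichartz and Hardy--Littlewood estimates remain valid for all $\alpha>1$ and $0<b<\frac{5}{2}$, with $\theta,\varepsilon$ sufficiently small. Thus the corollary reduces to a bookkeeping verification that the chain of inequalities used in the proof of Theorem \ref{GWPH2'} continues to hold once the restriction $b\leq \frac{3}{2}$ is dropped in favor of the direct assumption $\alpha>1$; no new analytic obstacle is anticipated.
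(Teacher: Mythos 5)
Your proposal is correct and coincides with the paper's own (implicit) justification: the authors do not give a separate proof of the corollary, but simply observe in the preceding remark that the constraint $b\leq\frac{3}{2}$ in Theorem \ref{GWPH2'} was imposed only to guarantee $\alpha>1$ in Lemma \ref{LG3}, so that once $\alpha>1$ is assumed directly the contraction argument runs unchanged for all $0<b<\frac{5}{2}$. You have identified exactly that point and verified it in the same way the paper does.
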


Theorems \ref{GWPH2} and \ref{GWPH2'} improve the results presented in Theorem A for $N=5,6,7$. The main new tool used to prove our nonlinear estimates is  the Hardy-Littlewood inequality (see Lemma \ref{Hardy}). As usual,  based on the Strichartz estimates, we construct a suitable metric space  such that the integral operator 
\begin{equation}\label{OPERATOR} 
G(u)(t)=e^{it\Delta^2}u_0+i\lambda \int_0^t e^{i(t-t')\Delta^2}|x|^{-b}|u(t')|^\alpha u(t')dt'
\end{equation}
is a contraction, which in view of the contraction mapping principle gives our global solution. Here and throughout the paper,  $e^{it\Delta^2}u_0$ stands for the solution of the linear problem associated with \eqref{IBNLS}. 

We point out that, for $N\geq3$ and $N\neq 5$, we obtain the best possible results, in the sense that $\alpha$ and $b$ range in the largest possible intervals, i.e., $\alpha$ involves the whole intercritical range and $b$ the same range where the local theory was proved (see \cite[Theorem 1.2]{GUZPAS}). On the other hand, in dimension $N=5$, when $\frac{8-2b}{5}<\alpha < 8-2b$ the case $\frac{3}{2}<b<\frac{5}{2}$ is not covered by our results and it is left as an open problem.

Next, making use of the estimates on $|x|^{-b}|u|^\alpha u$, we may also improve the scattering criterion and the stability
results shown in \cite{GUZPAS}, in the intercritical case. More precisely, we may prove the following.

\begin{proposition}\label{SCATTERSH1}  Let $u_0 \in  H^2(\mathbb{R}^N)$ and let $u$ be the corresponding global solution of \eqref{IBNLS}. Suppose  $\|u\|_{B(\dot{H}^{s_c})}< +\infty$ (or $\|u\|_{L^{a^*}_tL^{r^*}_x}< +\infty$) and $\sup\limits_{t\in \mathbb{R}}\|u(t)\|_{H^2_x}\leq \eta$. If the assumptions in Theorem \ref{GWPH2} (or Theorem \ref{GWPH2'}) hold, then $u$ scatters in $H^2(\mathbb{R}^N)$ as $t \rightarrow \pm\infty$. 
\end{proposition}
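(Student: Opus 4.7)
The plan is the standard scattering argument: use the finiteness of the scattering-critical norm to partition $\mathbb{R}$ into finitely many subintervals on which the critical norm is small, iterate the Strichartz/nonlinear estimates used in Theorem \ref{GWPH2} (or Theorem \ref{GWPH2'}) to upgrade local control into a global bound on $\|u\|_{B(L^2)}+\|\Delta u\|_{B(L^2)}$, and then apply Cook's method to produce the asymptotic states.

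Concretely, fix $\eta>0$ and let $\delta=\delta(\eta)$ denote the smallness parameter of the theorem. Because $\|u\|_{B(\dot H^{s_c})}<\infty$, I partition $\mathbb{R}=\bigcup_{j=1}^{M}I_j$ so that the critical norm on each $I_j$ is below some threshold $\delta_0$ to be fixed. On each $I_j$ I start the Duhamel formula at the left endpoint $t_j$ and apply Strichartz together with the nonlinear estimate of Lemma \ref{improvedlemma} (resp.\ Lemma \ref{LG3}), obtaining
\begin{equation*}
\|u\|_{B(L^2;I_j)}+\|\Delta u\|_{B(L^2;I_j)}\leq C\|u(t_j)\|_{H^2}+C\delta_0^{\alpha}\bigl(\|u\|_{B(L^2;I_j)}+\|\Delta u\|_{B(L^2;I_j)}\bigr).
\end{equation*}
Choosing $\delta_0$ so that $C\delta_0^\alpha\leq 1/2$ and invoking the assumption $\sup_t\|u(t)\|_{H^2}\leq\eta$, the left-hand side is bounded by $2C\eta$ on each $I_j$. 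Summing over $j$ gives the global bound $\|u\|_{B(L^2)}+\|\Delta u\|_{B(L^2)}\leq 2CM\eta<\infty$.

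Next I define the candidate asymptotic states
\begin{equation*}
u_\pm:=u_0+i\lambda\int_0^{\pm\infty}e^{-is\Delta^2}\bigl(|x|^{-b}|u(s)|^{\alpha}u(s)\bigr)\,ds
\end{equation*}
and verify convergence in $H^2$ through the dual Strichartz inequality: for $0<t_1<t_2$,
\begin{equation*}
\Bigl\|\int_{t_1}^{t_2}e^{-is\Delta^2}|x|^{-b}|u|^{\alpha}u\,ds\Bigr\|_{H^2}\lesssim \|u\|_{B(\dot H^{s_c};[t_1,t_2])}^{\alpha}\bigl(\|u\|_{B(L^2;[t_1,t_2])}+\|\Delta u\|_{B(L^2;[t_1,t_2])}\bigr),
\end{equation*}
and the right-hand side tends to $0$ as $t_1\to+\infty$ by dominated convergence, using the global finiteness established in the previous step. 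Hence $\{e^{-it\Delta^2}u(t)\}$ is Cauchy in $H^2$, $u_+$ is a well-defined element of $H^2$, and $\|u(t)-e^{it\Delta^2}u_+\|_{H^2}\to 0$ as $t\to+\infty$. The backward case is symmetric, and the variant corresponding to Theorem \ref{GWPH2'} is identical after replacing $\|\cdot\|_{B(\dot H^{s_c})}$ by $\|\cdot\|_{L^{a^*}_tL^{r^*}_x}$ throughout.

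The main obstacle is checking that the nonlinear estimates of Lemmas \ref{improvedlemma} and \ref{LG3} can be formulated on an arbitrary time interval $I\subset\mathbb{R}$ with a constant independent of $I$, and that they couple with the dual Strichartz inequality in exactly the form displayed above. This is not a deep issue, but it requires going back to the proofs in the paper to confirm that translation and restriction in time do not affect the constants and that the Duhamel integral can be started at any base point. Once that uniformity is in hand, the tail decay as $t_1\to\infty$ follows from dominated convergence applied to the space-time integrand of $\|u\|_{B(\dot H^{s_c};[t_1,t_2])}$.
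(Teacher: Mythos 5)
Your argument is correct and coincides with the paper's, which in fact omits the proof and simply defers to the analogous result in \cite[Theorem 1.7]{GUZPAS}: the standard scheme of partitioning $\mathbb{R}$ into finitely many intervals of small critical norm, closing the Strichartz bounds on each piece using the uniform $H^2$ bound at the left endpoints, and then showing $e^{-it\Delta^2}u(t)$ is Cauchy in $H^2$ via the dual estimates. The only cosmetic discrepancy is that the nonlinear estimates of Lemmas \ref{improvedlemma} and \ref{LG3} produce a factor $\|u\|_{L^\infty_t H^2_x}^{\theta}\|u\|_{B(\dot H^{s_c};I_j)}^{\alpha-\theta}\leq \eta^{\theta}\delta_0^{\alpha-\theta}$ rather than $\delta_0^{\alpha}$, and the tail decay should be read off from the specific finite-time-exponent admissible pair appearing in those lemmas rather than from the full supremum defining $\|\cdot\|_{B(\dot H^{s_c})}$; neither point affects the argument.
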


\begin{proposition}\label{LTP} 
Let $I\subseteq \mathbb{R}$ be an interval containing zero and 
suppose that assumptions in Theorem \ref{GWPH2} hold. Assume  $\widetilde{u}_0\in H^2(\mathbb{R}^N)$ and let  $\widetilde{u}$  be a solution of
\begin{equation*}\label{appsol}
\begin{cases}
i\partial_t \widetilde{u} +\Delta^2 \widetilde{u} + \lambda |x|^{-b} |\widetilde{u}|^\alpha \widetilde{u} =e,\quad t\in I, x\in\mathbb{R}^N,\\
\widetilde{u}(0)=\widetilde{u}_0,
\end{cases}
\end{equation*}  
 satisfying 
\begin{equation*}\label{HLP1} 
\sup_{t\in I}  \|\widetilde{u}\|_{H^2_x}\leq M \;\; \textnormal{and}\;\; \|\widetilde{u}\|_{B(\dot{H}^{s_c}; I)}\leq L,
\end{equation*}
 where $M$ and $L$ are positive constants.
Let $u_0\in H^2(\mathbb{R}^N)$ be such that 
\begin{equation*}\label{HLP2}
\|u_0-\widetilde{u}_0\|_{H^2}\leq M'\;\; \textnormal{and}\;\; \|e^{it\Delta^2}(u_0-\widetilde{u}_0)\|_{B(\dot{H}^{s_c}; I)}\leq \varepsilon,
\end{equation*}
for some positive constant $M'$ and some $0<\varepsilon<\varepsilon_1$. Moreover, assume also the following error estimates
\begin{equation*}
\|e\|_{B'(\dot{H}^{s_c}; I)}+\| e\|_{B'(L^2; I)}+ \|\nabla e\|_{L^2_IL^{\frac{2N}{N+2}}_x}\leq \varepsilon.
\end{equation*}
\indent Then, there exists a unique solution $u$ to \eqref{IBNLS} defined on $I\times \mathbb{R}^N$, with initial data   $u_0$ such that
\begin{equation*}\label{CLP} 
\|u-\widetilde{u}\|_{B(\dot{H}^{s_c}; I)}\lesssim\varepsilon\quad\textnormal{and} \quad \|u\|_{B(\dot{H}^{s_c}; I)} +\|u\|_{B(L^2; I)}+\|\Delta u\|_{B(L^2; I)}\lesssim 1,
\end{equation*}
where the implicit constants depend on $M, M'$ and $L$.
\end{proposition}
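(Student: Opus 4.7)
The plan is to prove Proposition \ref{LTP} by a standard long-time perturbation/stability argument, split into a short-time stability step and a finite iteration across subintervals. The main nontrivial inputs are the nonlinear estimates on $|x|^{-b}|u|^\alpha u$ already developed in the paper (via Strichartz and Hardy--Littlewood), which are what allow the scheme to close in the same regime as Theorem~\ref{GWPH2}.

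First I would partition $I$ into finitely many consecutive subintervals $I_j=[t_{j-1},t_j]$, $j=1,\dots,n$, with $n=n(L,\eta_0)$ chosen so that
\[
\|\widetilde{u}\|_{B(\dot{H}^{s_c};I_j)}\leq \eta_0,
\]
for a small universal $\eta_0>0$ to be fixed along the proof (this is possible because $\|\widetilde{u}\|_{B(\dot{H}^{s_c};I)}\leq L$ and the Strichartz norm is absolutely continuous in the endpoint). Setting $w=u-\widetilde{u}$, Duhamel's formula on $I_j$ starting from $t_{j-1}$ reads
\[
w(t)=e^{i(t-t_{j-1})\Delta^2}w(t_{j-1})+i\lambda\!\int_{t_{j-1}}^{t}\! e^{i(t-s)\Delta^2}|x|^{-b}\bigl(|u|^\alpha u-|\widetilde{u}|^\alpha\widetilde{u}\bigr)(s)\,ds-i\!\int_{t_{j-1}}^{t}\! e^{i(t-s)\Delta^2}e(s)\,ds.
\]
Applying the Strichartz estimates in the $B(\dot{H}^{s_c};I_j)$, $B(L^2;I_j)$ and $\Delta B(L^2;I_j)$ norms, together with the pointwise bound $\bigl||u|^\alpha u-|\widetilde{u}|^\alpha\widetilde{u}\bigr|\lesssim(|u|^\alpha+|\widetilde{u}|^\alpha)|w|$ and the nonlinear estimates established in the proof of Theorem \ref{GWPH2} (the same Lemmas that treat the operator $G$ in \eqref{OPERATOR}), I would obtain on each $I_j$ schematic bounds of the form
\[
\|w\|_{B(\dot{H}^{s_c};I_j)}\lesssim\|e^{i(t-t_{j-1})\Delta^2}w(t_{j-1})\|_{B(\dot{H}^{s_c};I_j)}+\bigl(\eta_0^\alpha+\|w\|_{B(\dot{H}^{s_c};I_j)}^\alpha\bigr)\|w\|_{B(\dot{H}^{s_c};I_j)}+\|e\|_{B'(\dot{H}^{s_c};I_j)},
\]
and analogous inequalities for the $H^2$-type norms, where the $\|\nabla e\|_{L^2_I L^{2N/(N+2)}_x}$ hypothesis enters to close the derivative estimate (consistently with the dual Strichartz pair used in the proof of the global theorems).

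Next, I would close each subinterval by a continuity/bootstrap argument: provided $\eta_0$ is small enough and the ``input'' linear flow $\|e^{i(t-t_{j-1})\Delta^2}w(t_{j-1})\|_{B(\dot{H}^{s_c};I_j)}$ and the error $\|e\|_{B'(\dot{H}^{s_c};I_j)}$ are below a threshold depending only on $\eta_0$, the inequality above yields
\[
\|w\|_{B(\dot{H}^{s_c};I_j)}\leq C\bigl(\|e^{i(t-t_{j-1})\Delta^2}w(t_{j-1})\|_{B(\dot{H}^{s_c};I_j)}+\|e\|_{B'(\dot{H}^{s_c};I_j)}\bigr),
\]
and a parallel bound in the $H^2$ norms using $M$, $M'$, and the error hypotheses. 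This gives the existence of $u$ on $I_j$ by the fixed-point analysis already carried out for \eqref{OPERATOR}. One then iterates: the linear flow starting at $t_j$ inherits a bound of the form $C_j\varepsilon$, where $C_j$ depends on the previous $C_{j-1}$ through Strichartz applied to the Duhamel difference on $I_j$, so that inductively $C_j\leq C(M,M',L,\eta_0)^j$. Choosing $\varepsilon_1$ small enough that $C(M,M',L,\eta_0)^n\varepsilon_1$ stays below the short-time smallness threshold on every subinterval propagates the estimate all the way to the last piece and yields the two conclusions $\|u-\widetilde{u}\|_{B(\dot{H}^{s_c};I)}\lesssim\varepsilon$ and the global Strichartz bound on $u$.

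The main obstacle I expect is the bookkeeping in the iteration together with the inhomogeneous weight. The presence of $|x|^{-b}$ forces one to use the precise nonlinear estimates via Hardy--Littlewood rather than plain H\"older, and one must verify that the exact admissible pair $(a^*,r^*)$ (or the abstract $B(\dot{H}^{s_c})$ pair) used in Theorems \ref{GWPH2}--\ref{GWPH2'} is compatible with differences of nonlinearities and with the derivative estimate that uses the $\nabla e$ hypothesis. Once the estimates are in a form matching the ones proved earlier, the perturbation scheme itself is routine, and the final bound on $\|\Delta u\|_{B(L^2;I)}$ follows by re-running the $H^2$-Strichartz estimate on $u=\widetilde{u}+w$ using $M$ and the just-established control on $w$.
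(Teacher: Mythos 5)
Your proposal is correct and follows essentially the same route as the paper, which omits the details of this proof by citing \cite[Theorem 1.9]{GUZPAS}: a short-time perturbation step for $w=u-\widetilde{u}$ closed via Strichartz together with the nonlinear estimates of Lemmas \ref{lemmaglobal1} and \ref{improvedlemma} (the gradient estimate being why only the hypotheses of Theorem \ref{GWPH2} are assumed), followed by a finite iteration over subintervals on which $\|\widetilde{u}\|_{B(\dot{H}^{s_c};I_j)}$ is small, with inductive control of the constants. This is exactly the scheme the paper carries out in detail for the energy-critical analogue (Lemma \ref{STP} and Theorem \ref{LTPC}).
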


Note that in Proposition \ref{LTP} we assume only that the assumption in Theorem \ref{GWPH2} hold (we do not consider the conditions of Theorem \ref{GWPH2'}). The reason is that to prove our nonlinear estimates we need to estimate the quantities $\nabla (|x|^{-b}|u|^\alpha u)$  in Theorem \ref{GWPH2} and $\Delta (|x|^{-b}|u|^\alpha u)$ in Theorem \ref{GWPH2'}. When dealing with the gradient we have nice estimates that allows to prove Proposition \ref{LTP}. On the other hand, the estimates on the Laplacian operator are not suitable enough to establish the same result. 

As a second interest in this paper, we study the IVP associated to the energy-critical IBNLS, that is, \eqref{IBNLS} with $\alpha =\frac{8-2b}{N-4}$. Before stating the results, let us recall some results for \eqref{IBNLS} in the limiting case $b = 0$, which is is termed as the biharmonic nonlinear Schr\"odinger equation (BNLS). In the defocusing case ($\lambda=1$), by combining the concentration-compactness argument (see \cite{KENIG}) with some Morawetz-type estimates, in \cite{Pausader07}, the author showed global well-posedness and scattering for $\alpha =\frac{8}{N-4}$ assuming radially symmetric initial data.  Later, in \cite{Miao-Xu-Zhao} the authors showed a similar result removing the radial assumption on the initial data, for $N \geq 9$. In \cite{Pausader-cubica} was showed the global well-posedness and scattering for the cubic BNLS  ($\alpha=2$) and $N=8$. In the focusing case ($\lambda=-1$), in \cite{Pausader09} and  \cite{Miao-Xu-Zhao09} the authors independently showed the global well-posedness and scattering in the energy-critical case, with radial data in $\dot{H}^2(\mathbb{R}^N)$ and energy norms below that of the ground states.

Our main  interest here is to study the problem of well-posedness and stability for \eqref{IBNLS}. First, we show that IBNLS is  well-posed in $\dot{H}^2(\Real^N)$ with some restrictions on the dimension.

\begin{theorem}\label{GWPCH2}
Assume $5\leq N\leq 11$, $\alpha=\frac{8-2b}{N-4}$ and $0<b<\frac{12-N}{N-2}$. Let $I\subseteq \mathbb{R}$ be a time interval containing zero. If $u_0 \in \dot{H}^2(\mathbb{R}^N)$ satisfies $\|u_0\|_{\dot{H}^2}\leq A$, for some $A>0$. Then there exists $\delta=\delta(A)>0$ such that if $\|e^{it\Delta^2}u_0\|_{B(I)}<\delta$, then  there exists a unique solution $u$ of \eqref{IBNLS} in $I\times \Real^N$ such that
\begin{equation*}
\|u\|_{B(I)}\leq  2 \delta \qquad \textnormal{and}\qquad \|\Delta u\|_{B(L^2;I)}\leq 2c A,
\end{equation*}
for some universal constant $c>0$, where
\begin{equation}\label{BI}
B(I)=L_I^{\frac{2(N+4)}{N-4}}L_x^{\frac{2(N+4)}{N-4}}.  
\end{equation} 
\end{theorem}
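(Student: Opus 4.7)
The plan is to run a contraction mapping argument on the Duhamel operator $G$ defined in \eqref{OPERATOR}, carried out in a Strichartz-type metric space adapted to the $\dot{H}^2$-critical setting. Concretely, I would consider the closed set
$$
X = \{u : \|u\|_{B(I)} \leq 2\delta,\;\; \|\Delta u\|_{B(L^2;I)} \leq 2cA\}
$$
equipped with the distance $d(u,v)=\|u-v\|_{B(I)}$, and verify that $(X,d)$ is complete. The smallness threshold $\delta = \delta(A)$ will be fixed later. Applying the standard Strichartz estimates for the linear biharmonic propagator to $G(u)$ gives
$$
\|G(u)\|_{B(I)} \lesssim \|e^{it\Delta^2}u_0\|_{B(I)} + \|F(u)\|_{B'(I)},\qquad
\|\Delta G(u)\|_{B(L^2;I)} \lesssim \|u_0\|_{\dot{H}^2} + \|\Delta F(u)\|_{B'(L^2;I)},
$$
where $F(u)=|x|^{-b}|u|^\alpha u$, so the whole argument reduces to proving suitable nonlinear estimates.

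The two estimates I need are of the form
$$
\|F(u)\|_{B'(I)} \lesssim \|u\|_{B(I)}^{\alpha+1},\qquad
\|\Delta F(u)\|_{B'(L^2;I)} \lesssim \|u\|_{B(I)}^{\alpha}\|\Delta u\|_{B(L^2;I)},
$$
together with their difference analogues for $F(u)-F(v)$. To prove them I would split the spatial integration in two regions $\{|x|\leq 1\}$ and $\{|x|>1\}$. Outside the unit ball the weight is harmless and the estimate reduces to the energy-critical biharmonic Schrödinger equation ($b=0$) handled via Hölder and Sobolev embedding $\dot{H}^2\hookrightarrow L^{\frac{2N}{N-4}}$; the critical exponent $\alpha=\frac{8-2b}{N-4}$ ensures the Hölder exponents close. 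Inside the unit ball the singularity is absorbed via the Hardy--Littlewood inequality (Lemma~\ref{Hardy}) against a suitable $|x|^{-\sigma}$ weight, after distributing Hölder exponents so that the weight lives in a weak-$L^p$ space.

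The delicate part is $\|\Delta F(u)\|_{B'(L^2;I)}$. The product rule generates three families of terms:
$$
\Delta(|x|^{-b})\,|u|^\alpha u\;\sim\; |x|^{-b-2}|u|^{\alpha+1},\quad \nabla(|x|^{-b})\cdot\nabla(|u|^\alpha u)\;\sim\; |x|^{-b-1}|u|^{\alpha}|\nabla u|,\quad |x|^{-b}\Delta(|u|^\alpha u).
$$
The first one, carrying the strongest singularity $|x|^{-b-2}$, is the main obstacle: to absorb it by Hardy--Littlewood one needs the resulting Lorentz index for the weight to be admissible, and balancing this against the scaling-critical Hölder decomposition is precisely what forces the restriction $0<b<\frac{12-N}{N-2}$ together with $5\leq N\leq 11$. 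The other two terms are more flexible and, via interpolation between $\|u\|_{B(I)}$ and $\|\Delta u\|_{B(L^2;I)}$ plus Sobolev embedding, can be controlled under the same constraints.

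Once both nonlinear estimates are in place, choosing $A$ and then $\delta=\delta(A)>0$ small enough makes $G$ map $X$ into itself with $\|G(u)\|_{B(I)}\leq 2\delta$ and $\|\Delta G(u)\|_{B(L^2;I)}\leq 2cA$, and makes $G$ a $\tfrac12$-contraction in $d$. Banach's fixed point theorem then produces the unique solution $u\in X$, and the bounds claimed in the theorem follow by construction. Persistence of regularity and continuous dependence are obtained by a standard bootstrap using the same estimates applied to $u-v$.
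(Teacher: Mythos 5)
Your overall skeleton (contraction mapping on a ball defined by $\|u\|_{B(I)}$ and $\|\Delta u\|_{B(L^2;I)}$, Strichartz, Hardy--Littlewood) matches the paper, but the nonlinear estimates as you describe them would not close, for two concrete reasons.

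First, the decomposition into $\{|x|\leq 1\}$ and $\{|x|>1\}$ is exactly what the paper \emph{avoids} in the critical case, and for a reason. In the energy-critical setting every norm at your disposal ($\|u\|_{B(I)}$, $\|\Delta u\|_{B(L^2;I)}$) is scale-invariant, so every Hölder decomposition must balance exactly; there is no subcritical reservoir analogous to the $\|u\|^{\theta}_{L^\infty_t H^2_x}$ factor (with the free small parameter $\theta$ and the two different choices of $r_1$) that makes the splitting work in Section \ref{Sec3}. In particular, your claim that outside the unit ball ``the estimate reduces to the energy-critical BNLS ($b=0$)'' is false: dropping the weight leaves the power $\alpha=\frac{8-2b}{N-4}$, which is energy-\emph{subcritical} for the unweighted equation, and the resulting Hölder exponents are not admissible without a time factor or an $L^\infty_tH^2$ norm, neither of which is available on an arbitrary interval $I$. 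The paper instead absorbs the weight globally by writing $|x|^{-b}|u|^{\alpha}=\left(|x|^{-1}|u|\right)^{b}|u|^{\alpha-b}$ and applying Lemma \ref{Hardy} with $\rho=1$ to get $\||x|^{-1}u\|_{L^\beta}\lesssim\|\nabla u\|_{L^\beta}$ (Lemma \ref{LemCritico}); this is where the conditions $\alpha-b>0$ and, for the difference estimate, $\alpha-b-1>0$ --- equivalently $b<\frac{12-N}{N-2}$ and hence $N\leq 11$ --- actually come from, not from a Lorentz-index count on $|x|^{-b-2}$.

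Second, you estimate $\|\Delta F(u)\|_{B'(L^2;I)}$, which produces the singularity $|x|^{-b-2}|u|^{\alpha+1}$ that you correctly identify as the main obstacle; but the paper never meets this term. It uses the derivative-gaining inhomogeneous estimate \eqref{EstimativaImportante}, $\|\Delta u\|_{L^q_IL^r_x}\lesssim\|\Delta u_0\|_{L^2}+\|\nabla F\|_{L^2_IL^{2N/(N+2)}_x}$, together with the Sobolev embedding $L_x^{\frac{2N(N+4)}{N^2+16}}\hookrightarrow\dot H^{-2,\frac{2(N+4)}{N-4}}$ to reduce \emph{both} the $B(I)$ bound and the $\Delta$-Strichartz bound to the single quantity $\|\nabla F(x,u)\|_{L^2_IL^{2N/(N+2)}_x}$, so only one derivative ever falls on the weight. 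If you insist on two derivatives you must control $|x|^{-b-2}|u|^{\alpha+1}$ and $|u|^{\alpha-1}|\nabla u|^2$ with purely scale-invariant norms, which at best imposes further restrictions and at worst does not close; in any case it is not needed. To repair your argument: drop the ball decomposition, replace the $\Delta F$ estimate by the $\nabla F$ estimate via \eqref{EstimativaImportante}, and prove Lemma \ref{LemCritico} as the paper does.
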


To show Theorem \ref{GWPCH2}, we need to establish a good estimate on the nonlinearity and to this end we use the Hardy-Littlewood inequality. The advantage here is that, contrary to estimates in the intercritical case, we do not need to divide the estimates inside and outside the unit ball. Note that in the previous theorem we have imposed the restriction $b<\frac{12-N}{N-2}$; this comes from the fact that to apply the fixed point argument we need $\alpha-b-1>0$ (see Lemma \ref{LemCritico}). Note that this also forces $N\leq 11$. We believe it would be interesting to extend the range of the parameter $b$ and establish similar results in higher dimensions, $N\geq 12$.

Finally, we  show a stability result for the solutions of \eqref{IBNLS} in the critical case.

\begin{theorem}\label{LTPC} 
Assume that assumptions in Theorems \ref{GWPCH2} hold. For a given interval $I$ with $0\in I\subseteq \mathbb{R}$, let $\tilde u:I\times \mathbb{R}^N\to\C$ be a solution of
\begin{equation}\label{erroreq}
i\partial_t \tilde{u} +\Delta^2 \tilde{u} + \lambda|x|^{-b} |\tilde{u}|^\alpha \tilde{u} =e,
\end{equation}  
with initial data $\tilde{u}_0\in \dot{H}^2(\mathbb{R}^N)$ satisfying 
\begin{equation}\label{erroreq1} 
\sup_{t\in I}  \|\tilde{u}\|_{\dot{H}^2_x}\leq M \;\;\; \textnormal{and}\;\;\; \|\tilde{u}\|_{B(I)}\leq L,\;\;\; \textnormal{for}\;\;M,L>0.
\end{equation}
Assume that $u_0\in \dot{H}^2(\mathbb{R}^N)$ satisfies
\begin{equation}\label{erroreq2}
\|u_0-\tilde{u}_0\|_{\dot{H}^2}\leq M',\quad \|\nabla e\|_{L^2_IL_x^{\tfrac{2N}{N+2}}}\leq \varepsilon  \quad{and}\quad \|e^{it\Delta^2}(u_0-\tilde{u}_0)\|_{B(I)}\leq \varepsilon,
\end{equation}
 for $M'>0$ and some $0<\varepsilon<\varepsilon_1$.

\indent Then, there exists a unique solution $u:I\times\R^N\to\C$ with $u(0)=u_0$ obeying	
\begin{align*}
\|u-\tilde{u}\|_{B(I)}\lesssim\varepsilon\quad \textnormal{and}\quad
\|u\|_{B(I)} +\|\nabla u\|_{B(L^2; I)} \lesssim 1,
\end{align*} 
where the implicit constants depend on $M, M'$ and $L$.
\end{theorem}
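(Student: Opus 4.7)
The plan is the standard long-time perturbation scheme, adapted to the energy-critical IBNLS. Setting $w:=u-\tilde{u}$, the difference formally satisfies
$$i\partial_t w +\Delta^2 w +\lambda |x|^{-b}\bigl(|\tilde{u}+w|^\alpha(\tilde{u}+w)-|\tilde{u}|^\alpha\tilde{u}\bigr) = -e,\qquad w(0)=u_0-\tilde{u}_0.$$
I would construct $w$ by a fixed point argument applied to the corresponding Duhamel formula and then set $u=\tilde{u}+w$. The relevant norms to control are $\|w\|_{B(I)}$ and $\|\nabla w\|_{B(L^2;I)}$, i.e., exactly the norms appearing in Theorem~\ref{GWPCH2}.

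Since $L$ may be large, a single contraction does not close. The remedy is to partition $I=\bigcup_{j=1}^{J} I_j$ into finitely many consecutive subintervals with $\|\tilde{u}\|_{B(I_j)}\leq\eta$, where $\eta=\eta(M)$ is chosen to match the smallness threshold in the proof of Theorem~\ref{GWPCH2}; the number $J$ depends only on $L$ and $\eta$. On the first subinterval $I_1$, the pointwise difference bound
$$\bigl||\tilde{u}+w|^\alpha(\tilde{u}+w)-|\tilde{u}|^\alpha\tilde{u}\bigr|\lesssim(|\tilde{u}|^\alpha+|w|^\alpha)|w|,$$
combined with the critical nonlinear estimates of Lemma~\ref{LemCritico} (which use Strichartz and the Hardy--Littlewood inequality to absorb $|x|^{-b}$), makes the Duhamel map for $w$ a contraction on a ball
$$\bigl\{w:\ \|w\|_{B(I_1)}\leq C\varepsilon,\ \|\nabla w\|_{B(L^2;I_1)}\leq CM'\bigr\}.$$
The hypothesis $\|\nabla e\|_{L^2_I L^{2N/(N+2)}_x}\leq\varepsilon$ enters through a dual Strichartz estimate for the forcing, and $\|e^{it\Delta^2}(u_0-\tilde{u}_0)\|_{B(I_1)}\leq\varepsilon$ controls the free evolution.

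Then I would iterate the short-time step on $I_2,\dots,I_J$. At the left endpoint $t_j$ of $I_j$, Strichartz applied to the Duhamel representation of $w$ restarted at $t_j$ bounds the new data norm $\|u(t_j)-\tilde{u}(t_j)\|_{\dot{H}^2}$ and the new linear evolution $\|e^{i(\cdot-t_j)\Delta^2}(u(t_j)-\tilde{u}(t_j))\|_{B(I_j)}$ by fixed multiples of the analogous quantities from the previous step, producing a geometric accumulation factor $C^{j}$ with a final constant $C^{J}=C(M,M',L)$. Choosing $\varepsilon_1=\varepsilon_1(M,M',L)$ so small that $C^{J}\varepsilon_1$ still lies below the short-time smallness threshold on each subinterval closes the induction; summing over $j$ yields the claimed bounds on all of $I$. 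Uniqueness in the constructed class is inherited from the contraction.

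The main obstacle is the bookkeeping of the accumulated constants across the $J$ subintervals and the verification that the short-time hypotheses persist at each stage in spite of the geometric growth of the norms. This is also the reason for phrasing the error hypothesis in terms of $\nabla e$ rather than $\Delta e$: as explained in the paragraph following Proposition~\ref{LTP}, the gradient estimates for $|x|^{-b}|u|^\alpha u$ behave well under Strichartz, while the Laplacian estimates are not sharp enough to propagate through the iteration at the critical regularity.
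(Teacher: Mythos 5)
Your proposal follows essentially the same route as the paper: a short-time perturbation lemma proved by contraction for the difference $w=u-\tilde u$ using Lemma \ref{LemCritico}, followed by a partition of $I$ into finitely many subintervals on which $\|\tilde u\|_{B(I_j)}$ is small, and an induction that tracks the geometric accumulation of constants in the restarted data $\|u(t_j)-\tilde u(t_j)\|_{\dot H^2}$ and $\|e^{i(\cdot-t_j)\Delta^2}(u(t_j)-\tilde u(t_j))\|_{B(I_j)}$, with $\varepsilon_1$ chosen small at the end. The only imprecision is that the contraction must be run on the gradient of the nonlinear difference (the pointwise bound \eqref{SECONDEI}, which requires $\alpha>1$, rather than \eqref{FEI}), since the inhomogeneous estimate \eqref{EstimativaImportante} and Lemma \ref{LemCritico} act at that level --- a point you do acknowledge in your closing paragraph.
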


\ The rest of the paper is organized as follows. In section \ref{Sec2}, we introduce some notations and give a review of the Strichartz estimates. In Section \ref{Sec3}, we prove the global well-posedness and stability results in the intercritical case. In the last section, Section $4$, we study the energy critical IBNLS by proving Theorems \ref{GWPCH2} and \ref{LTPC}.

\ 		

\section{\bf Preliminaries}\label{Sec2}

Let us start this section by introducing some notations.  By $c$ we denote various constants that may vary line by line. When the value of the constant $c$ is irrelevant in the estimates, we use the
notation $a \lesssim b$ to say that there exists a constant $c>0$ such that $a \leq cb$. 

By $L^p=L^p(X)$ we denote the standard Lebesgue space on the measurable space $X$ with the usual norm $\|\cdot\|_{L^p}$. If $f=f(t,x)$ is a function of the variable $t$ and $x$, we use $\|f\|_{L^p_x}$ to indicate that we are taking the $L^p$ norm with respect to the variable $x$. Also, given a time interval $I\subset \mathbb{R}$ we use $L^p_I$ to denote the $L^p$ space on $I$. In the case $I=\mathbb{R}$ we shall use $L^p_t$ instead of $L^p_{\mathbb{R}}$.  Given two positive numbers $q$ and $r$,  the  norm in the mixed   space $L^q_{I}L^r_x$ is given by
$$
\|f\|_{L^q_{I}L^r_x}=\left\|\|f(t,\cdot)\|_{L^r_x}\right\|_{L^q_I}.
$$

Let  $J^s$ and $D^s$ denote the Bessel and Riesz potentials of order $s$, that is, via Fourier transform, $\widehat{J^s f}=(1+|\xi|^2)^{\frac{s}{2}}\widehat{f}$ and $\widehat{D^sf}=|\xi|^s\widehat{f}.$ The norm in the Sobolev spaces $H^{s,r}=H^{s,r}(\mathbb{R}^N)$ and $\dot{H}^{s,r}=\dot{H}^{s,r}(\mathbb{R}^N)$ are given by $\|f\|_{H^{s,r}}:=\|J^sf\|_{L^r}$ and $\|f\|_{\dot{H}^{s,r}}:=\|D^sf\|_{L^r},$ respectively.
In the particular case $r=2$, as usual, we denote $H^{s,2}$ and $\dot{H}^{s,2}$  by $H^s$ and  $\dot{H}^{s}$, respectively. Moreover, if $s=0$ then $H^0=\dot{H}^{0}=L^2$.

We now recall two important inequalities.

\begin{lemma}[\textbf{Hardy-Littlewood inequality}]\label{Hardy}
For $1 < p \leq q < +\infty$, $N \geq 1$, $0 < s < N$ and $\rho \geq 0$ suppose that
$$
\rho < \frac{N}{q} \quad \mbox{and} \quad s =\frac{N}{p}-\frac{N}{q}+\rho.
$$
Then, for any $u \in H^{s,p}(\mathbb{R}^N)$ we have
$$
\||x|^{-\rho}u\|_{L^q}
\lesssim \|D^su \|_{L^p}.
$$
\end{lemma}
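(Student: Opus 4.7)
The inequality being stated is a weighted Sobolev-type embedding; in the literature it is usually attributed to Stein and Weiss. The plan is to reduce it to a statement about the Riesz potential and then prove the resulting weighted Hardy--Littlewood--Sobolev bound. First, since $0 < s < N$, the inverse Riesz operator $D^{-s}$ is given by convolution with a constant multiple of $|x|^{s-N}$; writing $u = D^{-s} g$ with $g := D^s u \in L^p$, the target inequality becomes
\begin{equation*}
\bigl\| |x|^{-\rho}\, (|\cdot|^{s-N} * g)\bigr\|_{L^q} \lesssim \|g\|_{L^p}.
\end{equation*}
A quick scaling check confirms that the relation $s = N/p - N/q + \rho$ is exactly the dimensional consistency required by such a scale-invariant estimate; the condition $\rho < N/q$ is the local integrability condition for $|x|^{-\rho}$ against an $L^q$ function; and the strict bounds $1 < p \leq q < \infty$ rule out the endpoints where the kernel operator is known to fail.

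The approach I would use to prove the Stein--Weiss bound is a direct decomposition of the convolution into three regions according to the sizes of $|x|$ and $|y|$, namely $|y| \leq |x|/2$, $|x|/2 < |y| < 2|x|$, and $|y| \geq 2|x|$. In the first and third regions $|x-y|$ is comparable to $\max(|x|,|y|)$, so the kernel reduces to $|x|^{s-N}$ or $|y|^{s-N}$; after multiplying by the weight $|x|^{-\rho}$, one obtains pointwise control by a weighted Riesz-type operator that can be treated by H\"older's inequality combined with the one-dimensional Hardy inequality in the radial variable (it is here that the hypothesis $\rho < N/q$ is decisive). For the middle region, one localizes to dyadic annuli $A_k = \{|x|\sim 2^k\}$: on each such annulus $|x|\sim|y|\sim 2^k$, so the weight $|x|^{-\rho}$ is essentially constant and equal to $2^{-k\rho}$, and the classical unweighted Hardy--Littlewood--Sobolev inequality on $A_k$ provides the $L^q(A_k)$-bound by $\|g \mathbf{1}_{\widetilde A_k}\|_{L^p}$.

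The main obstacle is the reassembly of the dyadic pieces in the middle region: after applying the unweighted Hardy--Littlewood--Sobolev inequality on each annulus one must sum $\ell^q$-norms across $k$ with weights $2^{-k\rho}$ against an $\ell^p$-norm of local pieces of $g$, which requires an interpolation/H\"older step that only closes because of the strict inequality $\rho < N/q$ (ensuring convergence of the resulting geometric series on one side and compatibility of exponents on the other). Once those three contributions are summed, the desired bound $\||x|^{-\rho} u\|_{L^q} \lesssim \|D^s u\|_{L^p}$ follows. Alternatively, one could simply cite Stein--Weiss directly, since the inequality is used in this paper only as a black box; the plan above is the route I would take if a self-contained proof were required.
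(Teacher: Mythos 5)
Your proposal is correct, but it does genuinely more than the paper, whose entire ``proof'' of this lemma is the citation ``See Theorem B* in [Stein--Weiss]'' --- exactly the fallback you mention at the end. Your reduction is the right one: writing $u = D^{-s}g$ with $g = D^s u$ turns the statement into the Stein--Weiss weighted Hardy--Littlewood--Sobolev inequality for the Riesz potential with inner weight exponent $0$, which is why only the outer condition $\rho < N/q$ appears (the inner condition $0 < N/p'$ is the reason for $p>1$), and the relation $s = N/p - N/q + \rho$ is the scaling identity. Your three-region decomposition is the standard self-contained argument and is sound in outline, with one attribution of hypotheses worth correcting: the condition $\rho < N/q$ is what makes the \emph{off-diagonal} regions $|y|\le |x|/2$ and $|y|\ge 2|x|$ close (there the kernel collapses to $|x|^{s-N}$ or $|y|^{s-N}$ and one needs Hardy's inequality in the radial variable, whose convergence is exactly $\rho < N/q$ on one side and $0 < N/p'$ on the other). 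On the diagonal annuli $A_k$ the mechanism is different from what you describe: the unweighted HLS on $A_k$ lands in $L^{q_0}$ with $1/q_0 = 1/p - s/N \le 1/q$, H\"older on the annulus of measure $\sim 2^{kN}$ costs a factor $2^{kN(1/q-1/q_0)} = 2^{k\rho}$ which the weight $2^{-k\rho}$ cancels exactly, and the pieces reassemble by $\ell^p \hookrightarrow \ell^q$ (this is where $p\le q$ is used); no smallness of $\rho$ is needed there. With that adjustment your sketch is a complete plan, but since the paper uses the lemma purely as a black box, citing Stein--Weiss as the paper does is the intended route.
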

\begin{proof}
See Theorem B* in \cite{SteinW}.
\end{proof}

\begin{lemma}[\textbf{Fractional Gagliardo-Nirenberg inequality}]\label{GNinequality}
Assume $1<p,p_0,p_1<\infty$,\newline $s,s_1\in\mathbb{R}$, and $\theta\in[0,1]$. Then the fractional Gagliardo-Nirenberg inequality
$$
\| D^s u \|_{L^p} \lesssim \|u\|_{L^{p_0}}^{1-\theta}\|D^{s_1}u\|_{L^{p_1}}^\theta.
$$
holds if and only if
$$
\frac{N}{p}-s=(1-\theta)\frac{N}{p_0}+\theta\left(\frac{N}{p_1}-s_1\right), \quad s\leq \theta s_1.
$$
\end{lemma}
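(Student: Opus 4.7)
The plan is to prove the equivalence in two directions: necessity of both conditions via scaling and modulation arguments, and sufficiency via a Littlewood--Paley decomposition paired with Bernstein's inequality. I will treat the homogeneous Riesz-potential version directly since the inequality is scale invariant.

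\textbf{Step 1 (Necessity of the dimensional identity).} I apply the proposed inequality to dilates $u_\mu(x):=u(\mu x)$ for $\mu>0$. A direct change of variables yields
$$
\|D^s u_\mu\|_{L^p}=\mu^{s-N/p}\|D^s u\|_{L^p},\quad \|u_\mu\|_{L^{p_0}}=\mu^{-N/p_0}\|u\|_{L^{p_0}},\quad \|D^{s_1}u_\mu\|_{L^{p_1}}=\mu^{s_1-N/p_1}\|D^{s_1}u\|_{L^{p_1}}.
$$
For the inequality to hold for all $\mu>0$ the exponents must balance, giving $\frac{N}{p}-s=(1-\theta)\frac{N}{p_0}+\theta\bigl(\frac{N}{p_1}-s_1\bigr)$.

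\textbf{Step 2 (Necessity of $s\leq \theta s_1$).} I test the inequality against modulated bumps $u_\xi(x):=\varphi(x)e^{i\xi\cdot x}$, where $\varphi$ is a fixed Schwartz cutoff and $|\xi|$ is large. A short Fourier-multiplier computation shows $\|D^s u_\xi\|_{L^p}\sim |\xi|^s$, $\|u_\xi\|_{L^{p_0}}=\|\varphi\|_{L^{p_0}}$, and $\|D^{s_1}u_\xi\|_{L^{p_1}}\sim |\xi|^{s_1}$ as $|\xi|\to\infty$. Inserting these into the claimed bound forces $|\xi|^{s-\theta s_1}$ to stay bounded, hence $s\leq \theta s_1$.

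\textbf{Step 3 (Sufficiency via Littlewood--Paley).} I decompose $u=\sum_{k\in\mathbb{Z}}P_k u$ with $P_k$ the frequency projection onto $|\xi|\sim 2^k$. Bernstein's inequality provides the two one-sided bounds
$$
\|D^s P_k u\|_{L^p}\lesssim 2^{ks}\cdot 2^{kN(1/p_0-1/p)}\|u\|_{L^{p_0}},\qquad \|D^s P_k u\|_{L^p}\lesssim 2^{k(s-s_1)}\cdot 2^{k(N/p_1-N/p)}\|D^{s_1}u\|_{L^{p_1}},
$$
valid when $p\geq p_0$ and $p\geq p_1$ respectively; the remaining subranges of $p$ will be reduced to these via H\"older after the Paley--Littlewood square-function bound. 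I then split the dyadic sum at a cutoff $k_0$, use the first estimate for $k\leq k_0$ and the second for $k>k_0$, and optimize $k_0$. Thanks to the scaling identity of Step~1, the two exponents in $k$ have opposite signs on either side of $k_0$, so both geometric series converge and the optimization yields the product $\|u\|_{L^{p_0}}^{1-\theta}\|D^{s_1}u\|_{L^{p_1}}^\theta$. Finally, the Littlewood--Paley characterization of $\dot H^{s,p}$ (valid for $1<p<\infty$) recovers $\|D^s u\|_{L^p}$ from the pieces.

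\textbf{Main obstacle.} The delicate point is the endpoint case $s=\theta s_1$, where one of the geometric series degenerates to a logarithmically divergent sum. I plan to handle this by passing through Besov-space refinements (working with $\ell^2$ or $\ell^\infty$ aggregation of the dyadic pieces and invoking the embedding $\dot B^{s,p}_{2}\hookrightarrow \dot H^{s,p}$ for $1<p<\infty$), or, alternatively, by a direct interpolation argument between the two Bernstein bounds using H\"older in the $k$-summation with appropriately chosen weights. The other technical nuisance is reducing to the case $p\geq \max\{p_0,p_1\}$; this is done by interpolating against a trivial $L^p$-inclusion step on each Littlewood--Paley block before applying Bernstein.
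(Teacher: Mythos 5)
The paper itself offers no proof of this lemma; it simply cites Corollary 1.3 of the reference by Hajaiej--Molinet--Ozawa--Wang, so any self-contained argument is by construction a different route. Your necessity direction is complete and correct: the dilation test gives the dimensional balance, and the modulated bumps give $s\leq\theta s_1$ (choose $\varphi$ with $\widehat{\varphi}$ compactly supported away from the origin so that $\|D^{s}u_\xi\|_{L^p}\sim|\xi|^{s}$ holds cleanly for every real $s$).

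The sufficiency direction contains a genuine gap, and the difficulty is not located where you place it. Write $a=s+\frac{N}{p_0}-\frac{N}{p}$ and $b=s-s_1+\frac{N}{p_1}-\frac{N}{p}$ for your two Bernstein exponents; the scaling identity is exactly $(1-\theta)a+\theta b=0$, so for $\theta\in(0,1)$ the two geometric series degenerate simultaneously or not at all, and this happens precisely when $a=b=0$, i.e. when $s-\frac{N}{p}=-\frac{N}{p_0}=s_1-\frac{N}{p_1}$ (all three spaces share the same Sobolev index). That is a proper subcase of $s=\theta s_1$: when $s=\theta s_1$ but $s_1\neq\frac{N}{p_1}-\frac{N}{p_0}$ your splitting works verbatim, with no logarithmic divergence. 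In the truly degenerate case, however, your dyadic bounds only yield $\sup_k\|D^sP_ku\|_{L^p}\lesssim\min\{\|u\|_{L^{p_0}},\|D^{s_1}u\|_{L^{p_1}}\}$, i.e. an $\ell^\infty$ (Besov $\dot B^{s}_{p,\infty}$) bound; since both block estimates are $k$-independent there, no H\"older weighting of the $k$-sum can upgrade $\ell^\infty$ to the $\ell^{\min(2,p)}$ aggregation required for the embedding into $\dot H^{s,p}$, so neither of your proposed fixes closes this case. What does work is the log-convexity of the complex interpolation norm, $\|u\|_{[X_0,X_1]_\theta}\leq\|u\|_{X_0}^{1-\theta}\|u\|_{X_1}^{\theta}$, combined with the identification of $[L^{p_0},\dot H^{s_1,p_1}]_\theta$ with $\dot H^{s,p}$ for these exponents. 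A second, smaller issue: the reduction to $p\geq\max\{p_0,p_1\}$ is not always available. The hypotheses only force $p\geq\min\{p_0,p_1\}$ (take $s=s_1=0$, where the claim is the three-exponent Lebesgue interpolation with $p$ strictly between $p_0$ and $p_1$), so in the intermediate regime you must apply H\"older on each block against both norms before invoking Bernstein on the resulting factors; this can be made to work, but it is not the routine reduction you describe.
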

\begin{proof} 
See Corollary 1.3 in \cite{Wangatall}.
\end{proof}

\ Next, we recall some Strichartz type estimates associated to the linear biharmonic Schr\"odinger equation. Given a real number $s<2$, the pair $(q,r)$ is called $\dot{H}^s$-biharmonic admissible  if 
\begin{equation}\label{CPA1}
\frac{4}{q}=\frac{N}{2}-\frac{N}{r}-s
\end{equation}
with
\begin{equation}\label{HsAdmissivel}
\begin{cases}
\frac{2N}{N-2s} \leq  r  <\frac{2N}{N-4},\;\;\textnormal{if}\;\;  N\geq 5,\\
2 \leq  r < + \infty,\;\;\;  \hspace{0.65cm}\textnormal{if}\;\;\;1\leq N\leq 4.
\end{cases}
\end{equation}

In the particular case $s=0$ we shall say that  $(q,r)$ is biharmonic Schr\"odinger admissible (or $B$-admissible for short). By setting $\mathcal{B}_s:=\{(q,r);\; (q,r)\; \textnormal{is} \;\dot{H}^s\textnormal{-biharmonic admissible}\}$, we define the Strichartz norm by
$$
\|u\|_{B(\dot{H}^{s})}=\sup_{(q,r)\in \mathcal{B}_{s}}\|u\|_{L^q_tL^r_x} 
$$
and its dual norm by
$$
\|u\|_{B'(\dot{H}^{-s})}=\inf_{(q,r)\in \mathcal{B}_{-s}}\|u\|_{L^{q'}_tL^{r'}_x},
$$
where $p'$ indicates the H\"older conjugate of $p$.
When the time norm is restricted to some interval $I\subset\mathbb{R}$, we will use the notations $B(\dot{H}^s;I)$ and $B'(\dot{H}^{-s};I)$. 

With these notations we recall the following Strichartz-type estimates.

\begin{lemma}\label{Lemma-Str}
Let $I\subset\mathbb{R}$ be an interval and $t_0\in I$.
	The following statements hold.
 \begin{itemize}
\item [(i)] (\textbf{Linear estimates}).
\begin{equation}\label{SE1}
\| e^{it\Delta^2}f \|_{B(L^2;I)} \lesssim\|f\|_{L^2},
\end{equation}
\begin{equation}\label{SE2}
\| e^{it\Delta^2}f \|_{B(\dot{H}^s;I)} \lesssim \|f\|_{\dot{H}^s}.
\end{equation}
\item[(ii)] (\textbf{Inhomogeneous estimates}).
\begin{equation}\label{SE3}					 
\left \| \int_{t_0}^t e^{i(t-t')\Delta^2}g(\cdot,t') dt' \right \|_{B(L^2;I) } \lesssim\|g\|_{B'(L^2;I)},
\end{equation}
\begin{equation}\label{SE5}
\left \| \int_{t_0}^t e^{i(t-t')\Delta^2}g(\cdot,t') dt' \right \|_{B(\dot{H}^s;I) } \lesssim\|g\|_{B'(\dot{H}^{-s};I)}.
\end{equation}
\end{itemize}
\end{lemma}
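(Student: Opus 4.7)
The plan is to reduce everything to the classical dispersive estimate for the fourth-order Schr\"odinger group, then transfer from $L^2$-admissible pairs to $\dot{H}^s$-admissible pairs via Sobolev embedding. First, I would prove (or cite) the two fundamental estimates for $e^{it\Delta^2}$: the $L^2$ unitarity $\|e^{it\Delta^2}f\|_{L^2_x}=\|f\|_{L^2_x}$ (which is immediate from Plancherel, since the multiplier has modulus one) and the dispersive decay $\|e^{it\Delta^2}f\|_{L^\infty_x}\lesssim |t|^{-N/4}\|f\|_{L^1_x}$. The latter is obtained by a stationary-phase analysis of the kernel $K_t(x)=\int_{\R^N}e^{i(x\cdot\xi+t|\xi|^4)}d\xi$: rescaling $\xi=|t|^{-1/4}\eta$ reduces matters to estimating an oscillatory integral with polynomial phase of fixed shape, whose nondegenerate critical points yield the $|t|^{-N/4}$ bound.

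With these two ingredients in hand, the estimates in (i) and (ii) for $s=0$ follow at once from the abstract Strichartz machinery of Keel-Tao: any pair $(q,r)$ with $\tfrac{4}{q}=\tfrac{N}{2}-\tfrac{N}{r}$, $q,r\ge 2$ (and the usual endpoint exclusion $(q,r,N)\neq(2,\infty,4)$) is admissible for an operator satisfying a $|t|^{-N/4}$ decay paired with $L^2$-unitarity. This yields both the homogeneous estimate \eqref{SE1} and the diagonal inhomogeneous estimate; the restriction of the time integral to the half-interval $[t_0,t]$ is handled by the Christ-Kiselev lemma for non-diagonal pairs, giving \eqref{SE3}.

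To pass to $\dot{H}^s$-admissible pairs ($0<s<2$) in \eqref{SE2} and \eqref{SE5}, I would use the following standard trick. Given $(q,r)$ satisfying \eqref{CPA1}--\eqref{HsAdmissivel}, define $\rho$ by $\tfrac{1}{\rho}=\tfrac{1}{r}+\tfrac{s}{N}$; the admissibility range on $r$ was designed precisely so that $\rho\in[2,\tfrac{2N}{N-4})$ and the Sobolev embedding $\dot{H}^{s,\rho}\hookrightarrow L^{r}$ holds. Moreover a direct computation from \eqref{CPA1} shows $\tfrac{4}{q}=\tfrac{N}{2}-\tfrac{N}{\rho}$, so $(q,\rho)$ is $B$-admissible. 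Writing $e^{it\Delta^2}f=D^{-s}e^{it\Delta^2}(D^sf)$ (since $D^s$ commutes with the Fourier multiplier), applying the $L^2$-level Strichartz to $D^sf$ in $L^q_tL^\rho_x$, and then using Sobolev embedding in $x$ gives \eqref{SE2}. The inhomogeneous version \eqref{SE5} is obtained by the same commutation and by dualizing in $x$: expressions $\|g\|_{L^{q'}_tL^{r'}_x}$ with $(q,r)\in\mathcal{B}_{-s}$ correspond via the dual Sobolev embedding to $\|D^{-s}g\|_{L^{q'}_tL^{\tilde\rho'}_x}$ with $(q,\tilde\rho)$ $B$-admissible.

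The proof is essentially bookkeeping once the dispersive estimate is in place; the only point requiring care is verifying that the admissibility window \eqref{HsAdmissivel} matches exactly the range for which the Sobolev embedding $\dot{H}^{s,\rho}\hookrightarrow L^r$ is valid in the non-endpoint sense and for which the corresponding $(q,\rho)$ remains $B$-admissible. This is the step I would write out most carefully. Since every ingredient is classical (the biharmonic dispersive estimate goes back to Ben-Artzi--Koch--Saut, and the $\dot{H}^s$-Strichartz transfer is used throughout Pausader's work on the energy-critical BNLS), in the final write-up it would suffice to cite these sources rather than reproduce the computations.
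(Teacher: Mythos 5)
Your outline is correct and is precisely the standard argument (dispersive decay plus Keel--Tao, Christ--Kiselev for the retarded non-diagonal pairs, and the Sobolev-embedding transfer to $\dot{H}^s$-admissible exponents) underlying the references the paper itself invokes: the paper gives no proof of Lemma \ref{Lemma-Str}, simply citing \cite{Pausader07} and \cite{Guo}. So your proposal matches the paper's approach, and your closing remark that a citation suffices is exactly what the authors do.
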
 
\begin{proof}
See for instance \cite{Pausader07} and \cite{Guo}.
\end{proof}

We also recall the following result.

\begin{proposition}\label{estimativanaolinear} Consider $N\geq 3$. Let $I\subset\mathbb{R}$ be an interval and $t_0\in I$. Suppose that\\ $F\in L^1_{loc}(I,H^{-4})$ and let  $u\in C(I,H^{-4})$ be a solution of
$$
u(t)= e^{i(t-t_0)\Delta^2}u(t_0)+i\lambda \int_{t_0}^t e^{i(t-t')\Delta^2}F(\cdot,t')dt'.
$$
 Then, for any $B$-admissible pair $(q,r)$ and $s \geq 0$, we obtain
\begin{equation}\label{ESB2}
\left\|D^s u\right\|_{L^{q}_{I}L_x^{r}} \lesssim \left\| D^{s}u(t_0)\right\|_{L^{2}}+\left\|D^{s-1} F\right\|_{L^{2}_{I}L_x^{\frac{2N}{N+2}}}.
\end{equation}
In particular, when $s=2$,
\begin{equation}\label{EstimativaImportante}
\left\|\Delta u\right\|_{L^{q}_{I}L_x^{r}} \lesssim \left\| \Delta u(t_0)\right\|_{L^{2}}+\left\|\nabla F\right\|_{L^{2}_{I}L_x^{\frac{2N}{N+2}}}.
\end{equation}
\end{proposition}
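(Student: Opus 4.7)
The plan is to apply $D^s$ to the Duhamel formula for $u$ and then reduce \eqref{ESB2} to a refined inhomogeneous Strichartz bound that gains one derivative compared with the standard estimate \eqref{SE3}. Since $D^s$ commutes with the propagator $e^{it\Delta^2}$, one has
\begin{equation*}
D^s u(t)=e^{i(t-t_0)\Delta^2}D^s u(t_0)+i\lambda\int_{t_0}^t e^{i(t-t')\Delta^2}D^s F(\cdot,t')\,dt',
\end{equation*}
and the linear term is controlled, for any $B$-admissible pair $(q,r)$, by \eqref{SE1}. Writing the nonlinearity as $D^s F=D(D^{s-1}F)$ and setting $G=D^{s-1}F$, it therefore suffices to prove
\begin{equation*}
\bigg\|\int_{t_0}^t e^{i(t-t')\Delta^2}DG(\cdot,t')\,dt'\bigg\|_{L^q_IL^r_x}\lesssim \|G\|_{L^2_IL^{2N/(N+2)}_x}.
\end{equation*}

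The key ingredient is the observation that the pair $\bigl(2,\tfrac{2N}{N-2}\bigr)$ is $\dot H^{-1}$-biharmonic admissible in the sense of \eqref{CPA1}--\eqref{HsAdmissivel}: for $s=-1$ the scaling identity \eqref{CPA1} and the range condition \eqref{HsAdmissivel} are both verified in every dimension $N\geq 3$. Specializing \eqref{SE2} at this level gives
\begin{equation*}
\|e^{it\Delta^2}f\|_{L^2_IL^{2N/(N-2)}_x}\lesssim \|f\|_{\dot H^{-1}},
\end{equation*}
and a standard duality ($TT^*$) argument, together with $\|\cdot\|_{\dot H^1}=\|D\cdot\|_{L^2}$ and the identification $(L^{2N/(N-2)})'=L^{2N/(N+2)}$, converts this into
\begin{equation*}
\bigg\|D\int_{I}e^{-it'\Delta^2}G(\cdot,t')\,dt'\bigg\|_{L^2_x}\lesssim \|G\|_{L^2_IL^{2N/(N+2)}_x}.
\end{equation*}
Composing the last bound with \eqref{SE1} for the $B$-admissible pair $(q,r)$ and pulling $D$ out of the propagator by commutativity produces the untruncated analogue of the target estimate, with $\int_I$ in place of $\int_{t_0}^t$.

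The remaining and main technical point is the passage from the untruncated integral to the retarded one. This is carried out by the Christ--Kiselev lemma, which applies here because every $B$-admissible pair satisfies $q>2$; indeed, the borderline value $q=2$ would force $r=\tfrac{2N}{N-4}$, which is excluded from the admissibility range \eqref{HsAdmissivel}. The strict inequality $2<q$ between the time exponents of the input and output spaces is precisely what makes Christ--Kiselev available. Combining the linear contribution controlled via \eqref{SE1} with the refined retarded estimate just obtained yields \eqref{ESB2} for all $s\geq 0$, and the particular case $s=2$ gives \eqref{EstimativaImportante} upon writing $\Delta=-D^2$ and using $\|Df\|_{L^p}\sim\|\nabla f\|_{L^p}$ for $1<p<\infty$.
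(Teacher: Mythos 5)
Your argument is correct. The paper itself offers no proof here --- it simply cites Proposition 2.3 of \cite{GUZPAS} --- and what you have written is essentially the standard argument behind that citation: $TT^*$ on the non-sharp pair $\bigl(2,\tfrac{2N}{N-2}\bigr)$, which is $\dot H^{-1}$-biharmonic admissible under \eqref{CPA1}--\eqref{HsAdmissivel}, followed by the Christ--Kiselev lemma to pass to the retarded integral. Two points are worth making explicit. First, the whole estimate hinges on \eqref{SE2} being valid at the \emph{negative} regularity $s=-1$, i.e.\ $\|e^{it\Delta^2}f\|_{L^2_tL^{2N/(N-2)}_x}\lesssim\|f\|_{\dot H^{-1}}$; this is a genuine one-derivative smoothing effect of the fourth-order propagator (it is false for $e^{it\Delta}$), and it is exactly what the references \cite{Guo, Pausader07} supply --- so you are not getting something for free, but you are leaning on the strongest input of the cited lemma, and it would be worth saying so. Second, your justification that $q>2$ for every $B$-admissible pair via ``$q=2$ forces $r=\tfrac{2N}{N-4}$'' only parses for $N\geq 5$; for $N=3,4$ one instead notes that $r<\infty$ and $r\geq 2$ give $\tfrac4q=\tfrac N2-\tfrac Nr<2$, so the conclusion still holds and Christ--Kiselev applies in all dimensions covered by the proposition. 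With those two clarifications the proof is complete and matches the intended one.
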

\begin{proof}
See Proposition 2.3 in \cite{GUZPAS}.
\end{proof}


Throughout the paper, we set $B=\{ x\in \mathbb{R}^N;|x|\leq 1\}$ and $B^C=\mathbb{R}^N\backslash B$. Recall that
$$\||x|^{-b}\|_{L^\gamma(B)}<+\infty\;\;\;\textnormal{if}\;\;\frac{N}{\gamma}-b>0\quad \textnormal{and}\quad \||x|^{-b}\|_{L^\gamma(B^C)}<+\infty\;\;\;\textnormal{if}\;\;\frac{N}{\gamma}-b<0.
$$
In order to get our nonlinear estimates, we need some pointwise estimate on 
 $F(x,z)=|x|^{-b}|z|^\alpha z$. The following will be enough to our purposes (see details in \cite[Remark 2.6]{CARLOS} and \cite[Remark 2.5]{paper2})
\begin{equation}\label{FEI}
 |F(x,z)-F(x,w)|\lesssim |x|^{-b}\left( |z|^\alpha+ |w|^\alpha \right)|z-w|
\end{equation}
and
\begin{equation}\label{SECONDEI}
\left|\nabla \left(F(x,z)-F(x,w)\right)\right|\lesssim  |x|^{-b-1}(|z|^{\alpha}+|w|^{\alpha})|z-w|+|x|^{-b}|z|^\alpha|\nabla (z- w)|+E, 
\end{equation}
where 
\begin{eqnarray*}
 E &\lesssim& \left\{\begin{array}{cl}
 |x|^{-b}\left(|z|^{\alpha-1}+|w|^{\alpha-1}\right)|\nabla w||z-w|, & \textnormal{if}\;\;\;\alpha > 1 \vspace{0.2cm} \\ 
|x|^{-b}|\nabla w||z-w|^{\alpha}, & \textnormal{if}\;\;\;0<\alpha\leq 1.
\end{array}\right.
\end{eqnarray*}

\ 

\section{\bf Global Well-Posedness and Stability for the intercritical case}\label{Sec3}
	
\ The goal of this section is to study the global well-posedness of the Cauchy problem \eqref{IBNLS}. We first turn attention to the proof of Theorem \ref{GWPH2}. The main point is to establish suitable estimates on the term  $F(x,u,v)=|x|^{-b}|u|^\alpha v$.  To simplify notation, when $u=v$, we denote $F(x,u,v)$ by $F(x,u)$.

\begin{lemma}\label{lemmaglobal1} 
Let $N\geq 3$ and $0<b<\min\{\frac{N}{2},4\}$. If $ \frac{8-2b}{N}<\alpha<4^*$ then the following statements hold:
\begin{itemize}
\item [(i)] $\left \|\chi_B F(x,u,v) \right\|_{B'(\dot{H}^{-s_c})}+
\left \|\chi_{B^C}F(x,u,v) \right\|_{B'(\dot{H}^{-s_c})} \lesssim \| u\|^{\theta}_{L^\infty_tH^2_x}\|u\|^{\alpha-\theta}_{B(\dot{H}^{s_c})} \|v\|_{B(\dot{H}^{s_c})};
$
\item [(ii)] $\left\|\chi_B F(x,u,v) \right\|_{B'(L^2)} + \left\|\chi_{B^C} F(x,u,v) \right\|_{B'(L^2)} 
\lesssim \| u\|^{\theta}_{L^\infty_tH^2_x}\|u\|^{\alpha-\theta}_{B(\dot{H}^{s_c})} \| v\|_{B(L^2)}$,
\end{itemize} 
where  $\theta\in (0,\alpha)$ is a sufficiently small number.	
\begin{proof} See Lemma 4.2 in \cite{GUZPAS}. 
\end{proof}
\end{lemma}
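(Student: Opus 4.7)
The plan is to prove both estimates by Hölder's inequality in space and time, combining the local $L^\gamma$ integrability of $|x|^{-b}$ on $B$ with its decay on $B^C$, together with Sobolev embeddings, and then matching the resulting Hölder exponents with admissible Strichartz pairs. The two items share the same scheme; the only essential difference is which family of admissible pairs we dualise against.

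For item (i) on $B$, I would fix a $\dot{H}^{-s_c}$-biharmonic admissible pair $(q,r)$ and estimate
$$
\|\chi_B F(x,u,v)\|_{B'(\dot{H}^{-s_c})}\leq \|\chi_B|x|^{-b}|u|^{\alpha}v\|_{L^{q'}_tL^{r'}_x}.
$$
Next, I would split the spatial Hölder exponent as $\tfrac{1}{r'}=\tfrac{1}{\gamma}+\tfrac{\alpha-\theta}{r_1}+\tfrac{\theta}{r_2}+\tfrac{1}{r_3}$, with $\gamma$ chosen so that $|x|^{-b}\chi_B\in L^\gamma$ (i.e.\ $\tfrac{N}{\gamma}-b>0$); $(a_1,r_1)$ and $(a_3,r_3)$ chosen to be $\dot{H}^{s_c}$-biharmonic admissible so that the $\alpha-\theta$ copies of $u$ and the single copy of $v$ are controlled in $B(\dot{H}^{s_c})$; and $r_2$ so that the Sobolev embedding $H^2\hookrightarrow L^{r_2}$ places the remaining $\theta$-power of $u$ into $L^\infty_t H^2_x$. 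A parallel Hölder split in time, $\tfrac{1}{q'}=\tfrac{\alpha-\theta}{a_1}+\tfrac{1}{a_3}$, then produces the stated bound. The $\chi_{B^C}$ estimate is handled identically but with the reverse integrability condition $\tfrac{N}{\gamma}-b<0$ on $\gamma$, which only forces different (still admissible) choices of the auxiliary pairs.

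For item (ii), I would follow the same structure, replacing the initial pair by a $B$-admissible $(q,r)\in\mathcal{B}_0$, and selecting an $L^2$-biharmonic admissible $(a_3,r_3)$ for $v$; this yields a $B(L^2)$ factor in place of the $B(\dot{H}^{s_c})$ factor that appeared for $v$ in item (i), while the other factors (the $L^\infty_tH^2_x$ norm with weight $\theta$ and the $B(\dot{H}^{s_c})$ norm with weight $\alpha-\theta$ coming from $u$) remain unchanged.

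The main technical obstacle will be checking that, under the hypotheses $\tfrac{8-2b}{N}<\alpha<4^*$ and $0<b<\min\{N/2,4\}$, the auxiliary exponents $\gamma,\,r_1,\,r_2,\,r_3$ and the associated time exponents $a_1,\,a_3$ can be chosen simultaneously within the ranges dictated by \eqref{HsAdmissivel} and by the Sobolev embedding $H^2\hookrightarrow L^{r_2}$. The small parameter $\theta\in(0,\alpha)$ is introduced precisely to create the slack needed to satisfy the scaling identities together with the admissibility constraints; since $\alpha$ varies over the full intercritical range, a sufficiently small $\theta$ does produce admissible choices, and a continuity/limiting argument in $\theta\to 0$ confirms the inequalities for both the $B$ and the $B^C$ splittings.
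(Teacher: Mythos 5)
Your proposal is correct and follows essentially the same approach as the paper's proof, which is deferred to Lemma 4.2 of \cite{GUZPAS}: a H\"older splitting of $|x|^{-b}|u|^{\alpha}v$ against a dual admissible pair, with the weight placed in $L^{\gamma}(B)$ or $L^{\gamma}(B^C)$ according to the sign of $\tfrac{N}{\gamma}-b$, the small $\theta$-power of $u$ absorbed by the Sobolev embedding $H^2\hookrightarrow L^{\theta r_1}$, and the remaining factors matched to $\dot{H}^{s_c}$- (resp.\ $L^2$-) biharmonic admissible pairs. This is precisely the scheme carried out in detail in the proofs of Lemmas \ref{improvedlemma} and \ref{LG3} of the present paper, including the choices $\theta r_1=\tfrac{2N}{N-4}$ on $B$ and $\theta r_1=2$ on $B^C$ that make $\tfrac{N}{\gamma}-b=\theta(2-s_c)>0$ and $-\theta s_c<0$, respectively.
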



\begin{lemma}\label{improvedlemma} 
Assume $N=5,6,7$, $0<b<\min\{\frac{N}{2},4\}$ and $ \frac{8-2b}{N}<\alpha<4^*$ $(\alpha<7-2b$ if $N=5$). Then,
\begin{equation}\label{LG1} 
\left\|\nabla F(x,u) \right\|_{L_t^2L_x^{\frac{2N}{N+2}}}\lesssim \| u\|^{\theta}_{L^\infty_tH^2_x}\|u\|^{\alpha-\theta}_{B(\dot{H}^{s_c})} \| \Delta u\|_{B(L^2)},
\end{equation}
where $\theta\in (0,\alpha)$ is a sufficiently small number.
\begin{proof}
First we  consider $N=6,7$. Define the following numbers:
\begin{align}\label{pairN>=5a}
\bar{a}\;=\;\frac{8\alpha(\alpha+1-\theta)}{8-2b-\alpha(N-4)}\;,\;\;\;\;\bar{r}=\frac{2\alpha N(\alpha+1-\theta)}{\alpha(N+4-2b)-2\theta (4-b)}
\end{align}
and
\begin{align}\label{pairN>=5b}
\bar{q}\;=\;\frac{8\alpha(\alpha+1-\theta)}{\alpha(N\alpha-4+2b)-\theta(N\alpha -8+2b)},
\end{align}
$\theta>0$ is sufficiently small. It follows easily that $(\bar{q},\bar{r})$ is $B$-admissible and $(\bar{a},\bar{r})$ is $\dot{H}^{s_c}$-biharmomic admissible. In addition,
\begin{equation}\label{holglo}
\frac{1}{2}=\frac{\alpha-\theta}{\bar{a}}+\frac{1}{\bar{q}}.
\end{equation}
Note that
$$
|\nabla F(x,u)|\lesssim |x|^{-b}\left(|u|^\alpha |\nabla u|+|x|^{-1}|u|^\alpha |u|\right).
$$
Thus, an application of the Hardy-Littlewood inequality (Lemma \ref{Hardy} with $\rho=1$ and $p=q$) gives $\left\||x|^{-1}(|u|^\alpha u)\right\|_{L^\beta} \lesssim \left\||u|^\alpha \nabla u\right\|_{L^\beta}$, where $1<\beta<N$. Let $A$ denotes either $B$ or $B^C$. The H\"older inequality and the Sobolev embedding lead to  
\[
\begin{split}
\left\|\nabla F(x,u)\right\|_{L_x^{\frac{2N}{N+2}}(A)}&\lesssim  \||x|^{-b}\|_{L^\gamma(A)}\||u|^\alpha \nabla u\|_{L^\beta} \lesssim \||x|^{-b}\|_{L^\gamma(A)}  \|u\|^\theta_{L_x^{\theta r_1}}  \|u\|^{\alpha-\theta}_{L_x^{\bar{r}}} \| \nabla u \|_{L_x^{r_2}}\\
&\lesssim  \||x|^{-b}\|_{L^\gamma(A)} \|u\|^\theta_{L_x^{\theta r_1}}  \|u\|^{\alpha-\theta}_{L_x^{\bar{r}}} \| \Delta u \|_{L_x^{\bar{r}}},
\end{split}
\]
provided
\begin{equation}\label{LGR1} 
\frac{N}{\gamma}=\frac{N+2}{2}-\frac{N}{\beta}\;, \qquad \;\frac{N}{\beta}=\frac{N}{r_1}+\frac{N(\alpha-\theta)}{\bar{r}}+\frac{N}{r_2}\;,\qquad \; 1=\frac{N}{\bar{r}}-\frac{N}{r_2}, \qquad \bar{r}<N.
\end{equation}
Using the definition of the number $\bar{r}$ one has
\begin{equation}\label{LGr2}
\frac{N}{\gamma}-b=\frac{\theta(4-b)}{\alpha}-\frac{N}{r_1}.
\end{equation}
Consequently, if $A = B$ we choose $r_1$ such that $\theta r_1=\frac{2N}{N-4}$, which gives $\frac{N}{\gamma}-b=\theta (2-s_c) > 0$ (recall that
$s_c < 2$). On the other hand, if $A = B^C$ we choose $r_1$ satisfying $\theta r_1 =2$, yielding $\frac{N}{\gamma}-b=-\theta s_c< 0$. Thus, in both cases the quantity $\||x|^{-b}\|_{L^\gamma(A)}$ is finite and, by Sobolev embedding, $H^2 \hookrightarrow
L^{\theta r_1}$. Therefore, in view of \eqref{holglo} we deduce that
$$
\left\|\nabla F(x,u)\right\|_{L^2L_x^{\frac{2N}{N+2}}}\lesssim \|u\|^\theta_{L^\infty_t H^2_x} \|u\|^{\alpha-\theta}_{L^{\bar{a}}_tL^{\bar{r}}_x}\|\Delta u\|_{L^{\bar{q}}_tL^{\bar{r}}_x}.
$$
To complete the proof we need to verify that $1<\beta <N$ and $\bar{r}<N$. Indeed, by using the first relation in \eqref{LGR1} and the values of $r_1$ above, we obtain
\begin{equation*}
\beta\;=\;
\begin{cases}
\frac{2N}{N+2-2b-2\theta (2-s_c)} \;\; \textnormal{if}\;\;A=B\\
\frac{2N}{N+2-2b+2\theta s_c} \;\quad \quad \textnormal{if}\;\;A=B^C.
\end{cases}    
\end{equation*}
It is easy to see that $1<\beta<N$ provided $b<\frac{N}{2}$ and $\theta$ is sufficiently small. Furthermore, note that $\bar{r}<N$ is equivalent to $\alpha <\frac{N+2-2b}{2}$, which is true if $N=6,7$, in view of our hypothesis $\alpha<\frac{8-2b}{N-4}$. 

 In the sequel, we consider the case $N=5$. Define,
for $\varepsilon>0$ sufficiently small, 
\begin{align*}
q_\varepsilon\;=\;\frac{8}{3-2\varepsilon}\;,\;\;\;\;r_\varepsilon=\frac{5}{1+\varepsilon},
\end{align*}
and
\begin{align*}
a\;=\;\frac{8(\alpha-\theta)}{1+2\varepsilon}\;,\;\;\;\;\;\;\;r=\frac{10\alpha(\alpha-\theta)}{\alpha(7-2b)-2\theta (4-b)-2\varepsilon \alpha}.
\end{align*}
Note that $r_\varepsilon<5$ and since $\alpha<7-2b$ we have  $r<10$, for $\varepsilon$ and $\theta$ sufficiently small. Moreover, an easy computation shows that $(a,r)$ is $\dot{H}^{s_c}$-biharmonic admissible and $(q_\varepsilon,r_\varepsilon)$ is $B$-admissible. Similarly as before we have
\[
\begin{split}
\left\|\nabla F(x,u)\right\|_{L_x^{\frac{2N}{N+2}}(A)} &\lesssim \||x|^{-b}\|_{L^\gamma(A)}\|u\|^\theta_{L_x^{\theta r_1}}  \|u\|^{\alpha-\theta}_{L_x^{r}}   \| \nabla u \|_{L_x^{r_3}}    \\
&\lesssim   \|u\|^\theta_{L_x^{\theta r_1}}  \|u\|^{\alpha-\theta}_{L_x^{r}} \| \Delta u \|_{L_x^{r_\varepsilon}} ,
\end{split}
\]
where 
\begin{equation*} 
\frac{N}{\gamma}=\frac{N}{2}+1-\frac{N}{r_1}-\frac{N(\alpha-\theta)}{r}-\frac{N}{r_3}=\frac{N}{2}+1-\frac{N}{r_1}-\frac{N(\alpha-\theta)}{r}-(\frac{N}{r_\varepsilon}-1).
\end{equation*}
Thus,  using the values of $r$ and $r_\varepsilon$, we deduce
$$
\frac{N}{\gamma}-b=\frac{\theta(4-b)}{\alpha}-\frac{N}{r_1},
$$
which is the same relation as in \eqref{LGr2}. Since $\frac{1}{2}=\frac{\alpha-\theta}{a}+\frac{1}{q_\varepsilon}$, the rest of the proof runs as in the previous case.
\end{proof}	 
\end{lemma}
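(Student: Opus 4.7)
The plan is to begin with the pointwise bound provided by \eqref{SECONDEI} in the case $u=v$, which yields $|\nabla F(x,u)|\lesssim |x|^{-b}|u|^\alpha|\nabla u|+|x|^{-b-1}|u|^{\alpha+1}$. The second summand has a stronger singularity at the origin than the first, so the key initial move is to apply the Hardy--Littlewood inequality (Lemma \ref{Hardy} with $\rho=1$ and $p=q=\beta$) to transfer one unit of $|x|^{-1}$ onto a derivative:
$$
\||x|^{-1}(|u|^\alpha u)\|_{L^\beta}\lesssim\||u|^\alpha\nabla u\|_{L^\beta},\qquad 1<\beta<N.
$$
After this reduction, the entire task collapses to estimating $\||x|^{-b}|u|^\alpha\nabla u\|_{L^2_t L^{2N/(N+2)}_x}$ alone.

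To control this, I would split the spatial domain into the unit ball $B$ and its complement $B^C$, placing the weight $|x|^{-b}$ into an appropriate $L^\gamma$ on each piece, requiring $\tfrac{N}{\gamma}-b>0$ on $B$ and $\tfrac{N}{\gamma}-b<0$ on $B^C$. Then a three-factor H\"older inequality distributes $|u|^\alpha|\nabla u|$ as $|u|^\theta\cdot|u|^{\alpha-\theta}\cdot|\nabla u|$, with $|u|^\theta\in L^{\theta r_1}_x$ controlled by $\|u\|_{H^2_x}$ via Sobolev embedding, $|u|^{\alpha-\theta}\in L^{\bar r}_x$ attached to an $\dot H^{s_c}$-biharmonic admissible pair $(\bar a,\bar r)$, and $|\nabla u|\in L^{r_2}_x$ with $\tfrac{1}{r_2}=\tfrac{1}{\bar r}-\tfrac{1}{N}$, so that Sobolev embedding gives $\|\nabla u\|_{L^{r_2}}\lesssim\|\Delta u\|_{L^{\bar r}}$ for a $B$-admissible companion $(\bar q,\bar r)$. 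Integration in time via H\"older with the identity $\tfrac{1}{2}=\tfrac{\alpha-\theta}{\bar a}+\tfrac{1}{\bar q}$ would then finish the bound, while the two natural choices $\theta r_1=\tfrac{2N}{N-4}$ and $\theta r_1=2$ make $\||x|^{-b}\|_{L^\gamma}$ finite on $B$ and $B^C$, respectively (both consistent with $b<\tfrac{N}{2}$ for $\theta$ small).

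The main obstruction is the Sobolev condition $\bar r<N$, which translates into $\alpha<\tfrac{N+2-2b}{2}$. This is automatic from $\alpha<4^*$ when $N=6,7$, but fails in general for $N=5$, where $4^*=8-2b$ exceeds $\tfrac{7-2b}{2}$. To rescue the argument in $N=5$, I would perturb the admissible pairs with a small parameter $\varepsilon>0$, working with the $B$-admissible pair $(q_\varepsilon,r_\varepsilon)=\bigl(\tfrac{8}{3-2\varepsilon},\tfrac{5}{1+\varepsilon}\bigr)$ and a matching $\dot H^{s_c}$-admissible $(a,r)$; this loosens the required range to $r<10$, which corresponds precisely to the stated restriction $\alpha<7-2b$. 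All H\"older and Sobolev relations, in particular the key identity
$$
\frac{N}{\gamma}-b=\frac{\theta(4-b)}{\alpha}-\frac{N}{r_1}
$$
that governs the choice of $r_1$, go through unchanged, so the same chain of inequalities delivers the estimate under the extra hypothesis.
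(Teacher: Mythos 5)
Your proposal is correct and follows essentially the same route as the paper: the same pointwise bound on $\nabla F$, the same use of Hardy--Littlewood with $\rho=1$ to absorb the $|x|^{-b-1}$ term, the same ball/complement splitting with the three-factor H\"older and the choices $\theta r_1=\tfrac{2N}{N-4}$ on $B$ and $\theta r_1=2$ on $B^C$, and the same identification of $\bar r<N$ (i.e.\ $\alpha<\tfrac{N+2-2b}{2}$) as the obstruction that forces the $\varepsilon$-perturbed pairs and the restriction $\alpha<7-2b$ when $N=5$. The key identity $\tfrac{N}{\gamma}-b=\tfrac{\theta(4-b)}{\alpha}-\tfrac{N}{r_1}$ you isolate is exactly the one the paper's proof hinges on in both cases.
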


As we already pointed out, in dimension $N=5$, we have the extra condition $\alpha<7-2b$. However, if we are interested in the global well-posedness for $\alpha$ in the intercritical range, that is, $\frac{8-2b}{5}<\alpha<8-2b$ we should take $b$ smaller than in the previous lemma. Here, we used the numbers defined in \eqref{pairN=5a}, i.e.,  
\begin{align*}
a^*\;=\;\frac{8\alpha(\alpha+1-\theta)}{8-2b-\alpha(N-4)+2\varepsilon\alpha}\;,\;\;\;\;r^*=\frac{2\alpha N(\alpha+1-\theta)}{\alpha(N+4-2b)-2\theta (4-b)-2\varepsilon \alpha}.
\end{align*}
\begin{lemma}\label{LG3} 
Assume $N=5$, $0<b\leq \frac{3}{2}$ and $ \frac{8-2b}{N}<\alpha<\frac{8-2b}{N-4}$. Then,
\begin{itemize}
\item [(i)] $\left\|F(x,u,v) \right\|_{B'(L^2)}\lesssim \| u\|^{\theta}_{L^\infty_tH^2_x}\|u\|^{\alpha-\theta}_{L^{a^*}_tL_x^{r^*}} \| v\|_{B(L^2)}$
\item [(ii)] $\left\|\Delta F(x,u) \right\|_{B'(L^2)}\lesssim \| u\|^{\theta}_{L^\infty_tH^2_x}\|u\|^{\alpha-\theta}_{L^{a^*}_tL_x^{r^*}} \| \Delta  u\|_{B(L^2)}$
\item [(iii)] $\left\|D^{s_c} F(x,u) \right\|_{B'(L^2)}\lesssim \| u\|^{\theta}_{L^\infty_tH^2_x}\|u\|^{\alpha-\theta}_{L^{a^*}_tL_x^{r^*}} \| u\|^{1-\eta}_{B(L^2)} \| \Delta  u\|^{\eta}_{B(L^2)}, \quad \eta=\frac{s_c}{2}$, 
\end{itemize}
where $\theta\in (0,\alpha)$ is a sufficiently small number.
\begin{proof}
We first note that ($a^*,r^*)$ satisfies the relation $\frac{4}{a^*}=\frac{N}{2}-\frac{N}{r^*}-s_c$ but it is not an $\dot{H}^{s_c}$ - biharmonic admissible pair. To work with admissible pairs, let us start by defining the following numbers:

\begin{align}\label{pairN=5b}
q^*\;=\;\frac{8\alpha(\alpha+1-\theta)}{\alpha(N\alpha-4+2b)-\theta(N\alpha -8+2b)+2\varepsilon\alpha},\;\;\quad q_\varepsilon=\frac{4}{2-\varepsilon},\quad\; r_\varepsilon=\frac{2N}{N-4+2\varepsilon},
\end{align}
were $\varepsilon,\theta>0$ are sufficiently small. It is east to check that ($q^*,r^*)$ and ($q_\varepsilon,r_\varepsilon)$  are $B$-admissible. Moreover,
\begin{equation}\label{N=5}
\frac{1}{q_\varepsilon'}=\frac{\alpha-\theta}{a^*}+\frac{1}{q^*}.
\end{equation}

 First we prove  (ii). Observe that 
$$
|\Delta F(x,u)|\lesssim |x|^{-b}\Big[|\Delta(|u|^\alpha u)|+|x|^{-2}||u|^\alpha u|+|x|^{-1}|\nabla (|u|^\alpha u)|\Big]
$$
 and Lemma \ref{Hardy} implies  $\left\||x|^{-2}(|u|^\alpha u)\right\|_{L_x^\beta} \lesssim \left\|\Delta (|u|^\alpha u)\right\|_{L_x^\beta}$ and $\left\||x|^{-1}\nabla(|u|^\alpha u)\right\|_{L_x^\beta} \lesssim \left\|\Delta (|u|^\alpha u)\right\|_{L_x^\beta}$, for any  $1<\beta<\frac{N}{2}$. Let $A$ denotes either $B$ or $B^C$. Applying the H\"older inequality we get
\begin{equation}\label{a1}
    \begin{split}
\left\|\Delta F(x,u)\right\|_{L_x^{r'_\varepsilon}}\lesssim  \||x|^{-b}\|_{L_x^\gamma(A)}\|\Delta \left(|u|^\alpha  u\right)\|_{L_x^\beta}
\end{split}
\end{equation}
where
\begin{equation}\label{a2}
\frac{N}{\gamma}=\frac{N+4-2\varepsilon}{2}-\frac{N}{\beta}.
\end{equation}
Next we observe that, for $\alpha>1$,
\[
|\Delta \left(|u|^\alpha  u\right)|\lesssim ||u|^\alpha\Delta u|+|u|^{\alpha-1}|\nabla u|^2.
\]
Thus, an application of Lemma \ref{GNinequality} and interpolation give
\begin{equation*}
\begin{split}
  \|\Delta \left(|u|^\alpha  u\right)\|_{L_x^\beta}&\lesssim \|u\|^{\alpha}_{L_x^{p_1}}\|\Delta u\|_{L_x^{p_2}} +\|u\|^{\alpha-1}_{L_x^{p_1}}\|\nabla u\|_{L_x^{p_3}}^2\\
  &\lesssim \|u\|^{\alpha}_{L_x^{p_1}}\|\Delta u\|_{L_x^{p_2}} +\|u\|^{\alpha-1}_{L_x^{p_1}}\|u\|_{L^{p_1}}\|\Delta u\|_{L_x^{p_2}}\\
  &=\|u\|^{\alpha}_{L_x^{p_1}}\|\Delta u\|_{L_x^{p_2}}\\
  &\lesssim \|u\|_{L_x^{r_3}}^{\kappa \alpha}\|u\|_{L_x^{r_2}}^{(1-\kappa) \alpha}\|\Delta u\|_{L_x^{p_2}},
\end{split}
\end{equation*}
where, for $\kappa\in(0,1)$,
\[
\frac{1}{\beta}=\frac{\alpha}{p_1}+\frac{1}{p_2}=\frac{\alpha-1}{p_1}+\frac{2}{p_3}=\frac{\eta\alpha}{r_3}+\frac{(1-\eta)\alpha}{r_2}+\frac{1}{p_2}.
\]
By choosing $\kappa=\theta/\alpha$, $r_2=p_2=r^*$, and $r_3=\theta r_1$ we finally deduce that
\begin{equation}\label{a3}
     \|\Delta \left(|u|^\alpha  u\right)\|_{L_x^\beta}\lesssim  \|u\|^\theta_{L_x^{\theta r_1}}  \|u\|^{\alpha-\theta}_{L_x^{r^*}} \| \Delta u \|_{L_x^{r^*}}
\end{equation}
with
\begin{equation}\label{a4}
    \frac{1}{\beta}=\frac{1}{r_1}+\frac{\alpha-\theta}{r^*}+\frac{1}{r^*}.
\end{equation}
Thus, from \eqref{a1}-\eqref{a3}, we get

\[
\left\|\Delta F(x,u)\right\|_{L_x^{r'_\varepsilon}}\lesssim \||x|^{-b}\|_{L_x^\gamma(A)} \|u\|^\theta_{L_x^{\theta r_1}}  \|u\|^{\alpha-\theta}_{L_x^{r^*}} \| \Delta u \|_{L_x^{r^*}}
\]
where, combining \eqref{a2} and \eqref{a4} with the value of $r^*$,
\begin{equation*}
\frac{N}{\gamma}-b=\frac{\theta(4-b)}{\alpha}-\frac{N}{r_1}.
\end{equation*}
which is the same relation as in \eqref{LGr2}. Arguing as in the proof of Lemma \ref{improvedlemma} and using \eqref{N=5}, it follows that
$$
\left\|\Delta F(x,u)\right\|_{L_t^{q'_\varepsilon}L_x^{r'_\varepsilon}}\lesssim \|u\|^\theta_{L^\infty H^2_x}  \|u\|^{\alpha-\theta}_{L^{a^*}_tL_x^{r^*}} \| \Delta u \|_{L^{q^*}_tL_x^{r^*}}\lesssim \|u\|^\theta_{L^\infty H^2_x}  \|u\|^{\alpha-\theta}_{L^{a^*}_tL_x^{r^*}} \| \Delta u \|_{B(L^2)}.
$$
To finish the proof of (ii), we check that $1<\beta<\frac{N}{2}$. Similarly as as in the proof of Lemma \ref{improvedlemma}, we have
\begin{equation*}
\beta\;=\;
\begin{cases}
\frac{2N}{N+4-2b-2\theta (2-s_c)-2\varepsilon}, \;\; \textnormal{if}\;\;A=B,\\
\frac{2N}{N+4-2b+2\theta s_c-2\varepsilon}, \;\quad \quad \textnormal{if}\;\;A=B^C,
\end{cases}    
\end{equation*}
which immediately implies the desired provided $\varepsilon$ and $\theta$ are sufficiently small.

 The proof of part (i) is close to that of (ii) with the advantage that it suffices to apply H\"older's inequality. So we omit additional details.
 We conclude with (iii). Applying Lemma \ref{GNinequality}
 we get
\begin{eqnarray*}
\left\|D^{s_c} \left(|x|^{-b}|u|^\alpha u\right) \right\|_{L_x^{r_\varepsilon'}}\lesssim  \left\| \left(|x|^{-b}|u|^\alpha u\right) \right\|^{1-\eta}_{L_x^{r_\varepsilon'}} \left\| \Delta\left(|x|^{-b}|u|^\alpha u\right) \right\|^{\eta}_{L_x^{r_\varepsilon'}},  \qquad \eta=\frac{s_c}{2}.
\end{eqnarray*}
Thus, H\"older's inequality and parts (i) and (ii) yield
\[
\begin{split}
\left\|D^{s_c} \left(|x|^{-b}|u|^\alpha u\right) \right\|_{L_t^{q_\varepsilon'} L_x^{r_\varepsilon'}}&\lesssim \| u\|^{\theta}_{L^\infty_tH^2_x}\|u\|^{\alpha-\theta}_{L^{a^*}_tL_x^{r^*}} \| u\|^{1-\eta}_{B(L^2)} \| \Delta  u\|^{\eta}_{B(L^2)},
\end{split}
\]
completing the proof of the lemma.
\end{proof}
\end{lemma}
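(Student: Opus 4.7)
The plan is to structure the proof around the two $B$-admissible pairs $(q^*,r^*)$ and $(q_\varepsilon,r_\varepsilon)$ from \eqref{pairN=5b}, exploit the H\"older relation \eqref{N=5} $\frac{1}{q_\varepsilon'}=\frac{\alpha-\theta}{a^*}+\frac{1}{q^*}$ to produce the time-norm bound after a fixed spatial estimate, and split $\mathbb{R}^N=B\cup B^C$ so that $\||x|^{-b}\|_{L^\gamma(A)}<\infty$ with $\gamma$ chosen by the H\"older scaling. The spatial scaling relation we will target in every case is the same one that appeared in \eqref{LGr2}, namely $\tfrac{N}{\gamma}-b=\tfrac{\theta(4-b)}{\alpha}-\tfrac{N}{r_1}$, and then we set $\theta r_1=\tfrac{2N}{N-4}$ on $B$ (making the right-hand side $\theta(2-s_c)>0$) and $\theta r_1=2$ on $B^C$ (making it $-\theta s_c<0$), guaranteeing integrability of $|x|^{-b}$ on the relevant region; Sobolev embedding then supplies $\|u\|_{L^{\theta r_1}}\lesssim \|u\|_{H^2}$ in both situations.

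For part (i), the argument is purely H\"older. On each of $A\in\{B,B^C\}$ I would write $\|F(x,u,v)\|_{L_x^{r'_\varepsilon}(A)}\lesssim\||x|^{-b}\|_{L_x^\gamma(A)}\|u\|_{L_x^{\theta r_1}}^\theta\|u\|_{L_x^{r^*}}^{\alpha-\theta}\|v\|_{L_x^{r^*}}$ and check that the H\"older exponents and the value of $r^*$ yield exactly the scaling relation above, verifying in addition that the resulting $\beta$ satisfies $1<\beta$; then H\"older in time against \eqref{N=5} delivers the estimate.

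For part (ii), which I expect to be the main obstacle, I would begin with the pointwise bound indicated by the author, $|\Delta F(x,u)|\lesssim|x|^{-b}[|\Delta(|u|^\alpha u)|+|x|^{-2}|u|^{\alpha+1}+|x|^{-1}|\nabla(|u|^\alpha u)|]$, and use Lemma \ref{Hardy} (with $\rho=2$ and $\rho=1$, respectively) to reduce the last two terms to $\|\Delta(|u|^\alpha u)\|_{L_x^\beta}$, valid provided $1<\beta<N/2$. Since we are in the regime $\alpha>1$ (this is exactly where $b\leq 3/2$ is needed, to ensure $\alpha>\tfrac{8-2b}{5}\geq 1$), I would use the chain rule estimate $|\Delta(|u|^\alpha u)|\lesssim|u|^\alpha|\Delta u|+|u|^{\alpha-1}|\nabla u|^2$, control $\|\nabla u\|^2_{L_x^{p_3}}\lesssim\|u\|_{L_x^{p_1}}\|\Delta u\|_{L_x^{p_2}}$ via Lemma \ref{GNinequality}, and then distribute the $\|u\|_{L_x^{p_1}}^\alpha$ factor through interpolation into $\|u\|_{L_x^{\theta r_1}}^\theta\|u\|_{L_x^{r^*}}^{\alpha-\theta}$. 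With $p_2=r^*$ this gives $\|\Delta(|u|^\alpha u)\|_{L_x^\beta}\lesssim\|u\|_{L_x^{\theta r_1}}^\theta\|u\|_{L_x^{r^*}}^{\alpha-\theta}\|\Delta u\|_{L_x^{r^*}}$ with $\tfrac{1}{\beta}=\tfrac{1}{r_1}+\tfrac{\alpha-\theta}{r^*}+\tfrac{1}{r^*}$; combining this with the H\"older bound for $|x|^{-b}$ on $A$ and checking that the resulting scaling yields once again the relation $\tfrac{N}{\gamma}-b=\tfrac{\theta(4-b)}{\alpha}-\tfrac{N}{r_1}$ closes the spatial estimate. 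The required $\beta$ turns out to be $\frac{2N}{N+4-2b-2\theta(2-s_c)-2\varepsilon}$ on $B$ and $\frac{2N}{N+4-2b+2\theta s_c-2\varepsilon}$ on $B^C$, both in $(1,N/2)$ for $\varepsilon,\theta$ small. A final H\"older in time using \eqref{N=5} converts $\|\Delta u\|_{L_t^{q^*}L_x^{r^*}}$ into $\|\Delta u\|_{B(L^2)}$ by $B$-admissibility of $(q^*,r^*)$.

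For part (iii), I would interpolate on the spatial side with Lemma \ref{GNinequality}: with $\eta=s_c/2\in(0,1)$,
\begin{equation*}
\|D^{s_c}F(x,u)\|_{L_x^{r'_\varepsilon}}\lesssim\|F(x,u)\|_{L_x^{r'_\varepsilon}}^{1-\eta}\|\Delta F(x,u)\|_{L_x^{r'_\varepsilon}}^{\eta}.
\end{equation*}
Then H\"older in time (splitting the $L^{q_\varepsilon'}_t$ norm as a product of two pieces with exponents $q_\varepsilon'/(1-\eta)$ and $q_\varepsilon'/\eta$) and the already-proved estimates (i) and (ii), together with the identity $\tfrac{1}{q_\varepsilon'}=\tfrac{\alpha-\theta}{a^*}+\tfrac{1}{q^*}$, produce precisely the claimed bound $\|u\|_{L^\infty_tH^2_x}^\theta\|u\|_{L^{a^*}_tL_x^{r^*}}^{\alpha-\theta}\|u\|_{B(L^2)}^{1-\eta}\|\Delta u\|_{B(L^2)}^{\eta}$. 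The delicate point throughout is the simultaneous verification that $\varepsilon,\theta$ can be chosen small enough so that all H\"older and admissibility constraints are met and all integrability conditions on $|x|^{-b}$ and $\beta$ are satisfied, but this is a routine small-parameter argument once the scaling relation \eqref{LGr2} is in hand.
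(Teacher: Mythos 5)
Your proposal follows essentially the same route as the paper's own proof: the same auxiliary $B$-admissible pairs $(q^*,r^*)$ and $(q_\varepsilon,r_\varepsilon)$ with the time-H\"older relation \eqref{N=5}, the same $B/B^C$ splitting with $\theta r_1=\tfrac{2N}{N-4}$ and $\theta r_1=2$ driving the scaling identity \eqref{LGr2}, the same Hardy--Littlewood reduction of the $|x|^{-2}$ and $|x|^{-1}\nabla$ terms to $\|\Delta(|u|^\alpha u)\|_{L^\beta}$ with the identical values of $\beta$, the same chain-rule plus Gagliardo--Nirenberg treatment of $|\Delta(|u|^\alpha u)|$ under $\alpha>1$, and the same $D^{s_c}$-interpolation with $\eta=s_c/2$ for part (iii). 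The argument is correct and matches the paper in all essentials.
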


Now, with all the previous lemmas in hand we are in a position to prove Theorems \ref{GWPH2} and \ref{GWPH2'}.  Once estimate \eqref{LG1} has been established, the proof of Theorem \ref{GWPH2} is the the same as in \cite[Theorem $1.6$]{GUZPAS}; so we omit the details. We only consider the case $N=5$ (Theorem \ref{GWPH2'}). 

\begin{proof}[\bf{Proof of Theorem \ref{GWPH2'}}] 
 Let $S$ be the set of all functions $u:\mathbb{R}\times \mathbb{R}^5 \to\mathbb{C}$ such that
$$
\|u\|_{L^{a^*}_tL_x^{r^*}}\leq 2\|e^{it\Delta^2}u_0\|_{L^{a^*}_tL_x^{r^*}}\;\quad \textnormal{and}\; \quad \|\langle \Delta \rangle u\|_{B(L^2)} \leq 2c\|u_0\|_{H^2},
$$ 
where $\|\langle \Delta \rangle u\|_{B(L^2)}:=\|u\|_{B(L^2)}+\|\Delta u\|_{B(L^2)}$.
The idea is to use the contraction mapping principle to show that the map $G$ in \eqref{OPERATOR} is a contraction on $S$ equipped with the metric 
$$
d(u,v)=\|u-v\|_{B(L^2)}.
$$
Define $p^*=\frac{10 (\alpha+1-\theta)}{5\alpha+1+2s_c-5\theta -2\varepsilon}$. It is easy to see that ($a^*, p^*)$ is $B$-admissible and 
$s_c=\frac{5}{p^*}-\frac{5}{r^*}$. In addition, $p^*<\frac{5}{s_c}$ and $ 2 <p^* <10$. Hence, by using the Sobolev embedding, it follows that
\begin{eqnarray*}
\|G(u)\|_{L_t^{a^*}L_x^{r^*}}&\leq&  \|e^{it\Delta^2}u_0\|_{L_t^{a^*}L_x^{r^*}}+\left\|D^{s_c}\int_0^t e^{i(t-s)\Delta^2} F(x,u)ds\right\|_{L_t^{a^*}L_x^{p^*}}\\
&\leq&  \|e^{it\Delta^2}u_0\|_{L_t^{a^*}L_x^{r^*}}+\left\|D^{s_c} F(x,u)\right\|_{B'(L^2)},
\end{eqnarray*}
where we have used the Strichartz estimate \eqref{SE3}, since ($a^*,p^*$) is $B$-admissible. An application of Lemma \ref{LG3} (iii) yields, for any $u\in S$,
\begin{equation}\label{TGHS1}
\begin{split}
\|G(u)\|_{L^{a^*}_tL_x^{r^*}}
&\leq \|e^{it\Delta^2}u_0\|_{L^{a^*}_tL_x^{r^*}} +c\| u \|^\theta_{L^\infty_tH^2_x}\| u \|^{\alpha-\theta}_{L^{a^*}_tL_x^{r^*}}\|\langle \Delta \rangle u\|_{B(L^2)} \\ 
&\leq  \|e^{it\Delta^2}u_0\|_{L^{a^*}_tL_x^{r^*}}+2^{\alpha+1}c^{\theta+2}\eta^{\theta+1}\| e^{it\Delta^2}u_0 \|^{\alpha-\theta}_{L^{a^*}_tL_x^{r^*}},
\end{split}
\end{equation}
where we have used the fact that $(\infty,2)$ is $B$-admissible to see that $\|u\|_{L^\infty_tH^2_x}\leq \|\langle\Delta\rangle u\|_{B(L^2)}$.

On the other hand, estimates \eqref{SE1} and \eqref{SE3} imply
\begin{equation*}\label{GHs21}
\|G(u)\|_{B(L^2)}\leq c\|u_0\|_{L^2}+ c\| F(x,u) \|_{B'(L^2)}
\end{equation*}
and	
\begin{equation*}
\|\Delta G(u)\|_{B(L^2)}\leq c \|\Delta u_0\|_{L^2}+ c\|\Delta F(x,u)\|_{B'(L^2)}.
\end{equation*}
Therefore, from Lemma \ref{LG3},
\begin{equation}\label{TGHS11}
\begin{split}
\|G(u)\|_{B(L^2)}+\|\Delta G(u)\|_{B(L^2)}&\leq  c\|u_0\|_{H^2}+c\| u \|^\theta_{L^\infty_tH^2_x}\| u \|^{\alpha-\theta}_{L^{a^*}_tL_x^{r^*}}(\|\Delta u\|_{B(L^2)}+\|u\|_{B(L^2)})\\
& \leq  c\|u_0\|_{H^2}+c2^{\alpha+1}c^{\theta+1}\|u_0\|_{H^2_x}^{\theta+1} \| e^{it\Delta^2}u_0 \|^{\alpha-\theta}_{L^{a^*}_tL_x^{r^*}}\\
& \leq  c\|u_0\|_{H^2}+c2^{\alpha+1}c^{\theta+1}\eta^\theta \| e^{it\Delta^2}u_0 \|^{\alpha-\theta}_{L^{a^*}_tL_x^{r^*}}\|u_0\|_{H^2_x}
\end{split}
\end{equation}
 Now if $\| e^{it\Delta^2}u_0 \|_{L^{a^*}_tL_x^{r^*}}<\delta$ with
\begin{equation}\label{WD1}
\delta\leq \min\left\{\sqrt[\alpha-1-\theta]{\frac{1}{2c^{\theta+2}2^{\alpha+1}\eta^{\theta+1}}}     , \sqrt[\alpha-\theta]{ \frac{1}{2c^{\theta+1}2^{\alpha+1}\eta^\theta}}\right\},
\end{equation}
 it follows from \eqref{TGHS1} and \eqref{TGHS11} that 
$$\|G(u)\|_{L^{a^*}_tL_x^{r^*}}\leq 2\| e^{it\Delta^2}u_0 \|_{L^{a^*}_tL_x^{r^*}}\quad \mbox{and}\quad \|\langle \Delta \rangle G(u)\|_{B(L^2)}\leq 2c\|u_0\|_{H^2},$$
which means to say   $G(u)\in S$.

\ To show that $G$ is a contraction on $S$, we repeat the above computations taking into account \eqref{FEI}. Indeed, for any $u,v\in S$ we may show that
\begin{equation*}\label{C2GH1}
\|G(u)-G(v)\|_{B(L^2)}\leq 2^{\alpha+1}c^{\theta+1} \| u_0 \|^\theta_{H^2}\|e^{it\Delta^2}u_0 \|^{\alpha-\theta}_{L^{a^*}_tL_x^{r^*}} \|u-v\|_{B(L^2)}.
\end{equation*}
From the last inequality and \eqref{WD1} it follows that
$$
d(G(u),G(v)) \leq 2^{\alpha+1}c^{\theta+1} \| u_0 \|^\theta_{H^2}\|e^{it\Delta^2}u_0 \|^{\alpha-\theta}_{L^{a^*}_tL_x^{r^*}}\;d(u,v)\leq \frac{1}{2}\;d(u,v),
$$
which means that  $G$ is also a contraction. Therefore, an application of the contraction mapping principle, gives that $G$ has a unique fixed point $u\in S$, which in turn is a global solution of \eqref{IBNLS}. The proof of the theorem is thus completed.
\end{proof}

\ The proof of Propositions \ref{SCATTERSH1} and \ref{LTP} is similar to those in \cite[Theorem 1,7]{GUZPAS} and \cite[Theorem 1.9]{GUZPAS}, respectively; so we omit the details.

\ 

\section{\bf Well-Posedness and Stability for the energy-critical case}

\ In this section study the Cauchy problem \eqref{IBNLS}, for $\alpha=\frac{8-2b}{N-4}$, i.e., energy critical case. We first consider the local well-possedness result (Theorem \ref{GWPCH2}) and in the sequel we study a stability result (Theorem \ref{LTPC}). The theorems follow from a contraction mapping argument based on the Strichartz estimates. Before showing the results we establish a technical lemma.
\begin{lemma}\label{LemCritico}
Suppose that $\alpha$ and $b$ are as in Theorem \ref{GWPCH2}. Then the following statements hold.
\begin{itemize}
\item [(i)] $\left\||x|^{-b}|f|^\alpha \nabla g\right\|_{L^2_IL^{\frac{2N}{N+2}}}\;\lesssim \;\|\Delta f\|^{b}_{B(L^2;I)} \|f\|^{\alpha-b}_{B(I)}\|\Delta g\|_{B(L^2;I)}$;
\item [(ii)] $\left\||x|^{-b}|f|^{\alpha -1}h\nabla g\right\|_{L^2_IL^{\frac{2N}{N+2}}}\;\lesssim \;\|\Delta f\|^{b}_{B(L^2;I)} \|f\|^{\alpha-1-b}_{B(I)}\|h\|_{B(I)}\|\Delta g\|_{B(L^2;I)}$,\;\;\;if\;\;\;$\alpha>1$;
\item[(iii)]   $\left\||x|^{-b}|f|^\alpha (|x|^{-1}g)\right\|_{L^2_IL^{\frac{2N}{N+2}}}\;\lesssim \;\|\Delta f\|^{b}_{B(L^2;I)} \|f\|^{\alpha-b}_{B(I)}\|\Delta g\|_{B(L^2;I)}$,
\end{itemize}
with $B(I)$ defined in \eqref{BI}.
\begin{proof}
First note that the condition $b<\frac{12-N}{N-2}$ implies that $\alpha-b-1>0$. Next we define the following numbers
\begin{equation*}
q=\frac{2(N+4)(b+1)}{b(N-2)+N-4}\qquad r=\frac{2N(N+4)(b+1)}{N^2+b(N^2+8)+16}\quad \textnormal{and}\quad \bar{r}=\frac{2(N+4)}{N-4}.
\end{equation*}
It is easy to see that ($q,r)$ is $B$-admissible. 

Let us prove (i). By setting
$$
\beta=\frac{Nr}{N-r},
$$
we may check that
\begin{equation*}
\beta<N, \qquad\frac{N+2}{2N}=\frac{\alpha-b}{\bar{r}}+\frac{b}{\beta}+\frac{1}{\beta},\qquad  \frac{1}{2}=\frac{\alpha-b}{\bar{r}}+\frac{b}{q}+\frac{1}{q}, \qquad \textnormal{and}\qquad 1=\frac{N}{r}-\frac{N}{\beta}.  
\end{equation*}
Now, applying H\"older's inequality and Lemma \ref{Hardy} (with $\rho=1$) we deduce
\[
\begin{split}
\left\||x|^{-b}|f|^\alpha \nabla g\right\|_{L^2_IL^{\frac{2N}{N+2}}}
&\lesssim  \||x|^{-1}f\|_{L_I^qL_x^\beta}^{b} \|f\|^{\alpha-b}_{L_I^{\bar{r}}L_x^{\bar{r}}}\|\nabla g\|_{L^q_IL^\beta_x}\\
&\lesssim  \|\nabla f\|_{L_I^qL_x^\beta}^{b} \|f\|^{\alpha-b}_{L_I^{\bar{r}}L_x^{\bar{r}}}\|\nabla g\|_{L^q_IL^\beta_x}\\
&\lesssim \|\Delta f\|^{b}_{L^q_IL^r_x} \|f\|^{\alpha-b}_{B(I)}\|\Delta g\|_{L^q_IL^r_x},
\end{split}
\]
where in the last inequality we used the Sobolev embedding. This completes the proof of (i) since ($q,r)$ is $B$-admissible. 

The proof of (ii) follows the same lines as above just replacing $|f|^{\alpha}$ by $|f|^{\alpha-1}h$. Estimate (iii) also follows as in (i) taking into account that, from Lemma \ref{Hardy}, $\||x|^{-1}g\|_{L_x^\beta}\lesssim \|\nabla g\|_{L_x^\beta}$. The proof of the lemma is thus completed.
\end{proof}
\end{lemma}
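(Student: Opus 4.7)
The plan is to reduce all three estimates to a single Hölder--Hardy--Sobolev scheme, exploiting the factorization
$$
|x|^{-b}|f|^\alpha \;=\; \bigl(|x|^{-1}|f|\bigr)^{b}\,|f|^{\alpha-b},
$$
which is legitimate because the hypothesis $b<\tfrac{12-N}{N-2}$ is exactly the condition $\alpha-b-1>0$ (so in particular $\alpha>b$). I would begin by fixing a $B$-admissible ``carrier'' pair $(q,r)$ and a Sobolev companion exponent $\beta$ satisfying $1=\tfrac{N}{r}-\tfrac{N}{\beta}$ (so that $\dot W^{2,r}\hookrightarrow\dot W^{1,\beta}$ via Sobolev embedding) and $\beta<N$ (so that Lemma \ref{Hardy} applies with $\rho=1$, $p=\beta$). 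The explicit choices of $q,r,\beta$ suggested by the relations below are the ones written in the statement; they are the unique pair for which all Hölder exponents balance.

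For part (i), I would apply Hölder in space to $(|x|^{-1}|f|)^{b}\cdot|f|^{\alpha-b}\cdot|\nabla g|$ using the triple of exponents $(\beta,\bar r,\beta)$ where $\bar r=\tfrac{2(N+4)}{N-4}$, checking the balance
$$
\tfrac{N+2}{2N}=\tfrac{b+1}{\beta}+\tfrac{\alpha-b}{\bar r}.
$$
Then Lemma \ref{Hardy} converts $\||x|^{-1}f\|_{L^\beta_x}\lesssim\|\nabla f\|_{L^\beta_x}$, and Sobolev embedding converts $\|\nabla f\|_{L^\beta_x}\lesssim\|\Delta f\|_{L^r_x}$ and $\|\nabla g\|_{L^\beta_x}\lesssim\|\Delta g\|_{L^r_x}$. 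Hölder in time with the matching exponents $(q,\bar r,q)$, i.e.\ $\tfrac{1}{2}=\tfrac{b+1}{q}+\tfrac{\alpha-b}{\bar r}$, packages the resulting $L^q_I L^r_x$ norms into $\|\Delta f\|_{B(L^2;I)}^{b}\|\Delta g\|_{B(L^2;I)}$, while the $L^{\bar r}_{t,x}$ norm of $f$ is exactly $\|f\|_{B(I)}^{\alpha-b}$. Since $(q,r)$ is $B$-admissible, the estimate closes.

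Parts (ii) and (iii) are cosmetic modifications of (i). For (ii), I instead write $|x|^{-b}|f|^{\alpha-1}h=(|x|^{-1}|f|)^{b}|f|^{\alpha-1-b}h$ (valid because $\alpha-1-b>0$); everything in the Hölder chain is unchanged except that one of the $\|f\|_{L^{\bar r}}$ factors is replaced by $\|h\|_{L^{\bar r}}$, yielding the claimed bound with $\|h\|_{B(I)}$ in place of one factor of $\|f\|_{B(I)}$. For (iii), the factor $\nabla g$ is replaced by $|x|^{-1}g$; here I apply Lemma \ref{Hardy} once more with $\rho=1$ to $g$ itself, reducing $\||x|^{-1}g\|_{L^\beta_x}\lesssim\|\nabla g\|_{L^\beta_x}$, after which the argument of (i) applies verbatim.

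The main obstacle is purely combinatorial: one must produce a single triple $(q,r,\beta)$ that is simultaneously (a) $B$-admissible in time, (b) Sobolev-compatible via $1=\tfrac{N}{r}-\tfrac{N}{\beta}$ with $\beta<N$, and (c) consistent with the spatial and temporal Hölder balances above, once $\alpha=\tfrac{8-2b}{N-4}$ is substituted. The restrictions $5\le N\le 11$ and $b<\tfrac{12-N}{N-2}$ are what allow all inequalities $\alpha-b-1>0$, $\beta<N$, $r<N$, and admissibility to hold simultaneously; with the explicit $q,r,\beta$ chosen, the verifications reduce to elementary algebra and are exactly the computations needed to plug into the Hölder--Hardy--Sobolev chain.
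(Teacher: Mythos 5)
Your proposal is correct and follows essentially the same route as the paper: the identical factorization $|x|^{-b}|f|^\alpha=(|x|^{-1}|f|)^b|f|^{\alpha-b}$, the same exponents $q$, $r$, $\bar r$, $\beta$ with the same H\"older balances in space and time, and the same use of Lemma \ref{Hardy} with $\rho=1$ followed by Sobolev embedding $\dot W^{2,r}\hookrightarrow \dot W^{1,\beta}$. The treatment of parts (ii) and (iii) as direct modifications of (i) also matches the paper's argument.
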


\begin{proof}[\bf{Proof of Theorem \ref{GWPCH2}}] 
As before we will use the contraction mapping principle to show that operator  $G$ defined in \eqref{OPERATOR} is a contraction on a suitable metric space. For $K$ and $\rho$ positive constans to be determined later, let $S_{\rho,K}$ be the set of all functions $u:I\times\mathbb{R}^5\ \to\mathbb{C}$ such that
$$
 \|u\|_{B(I)}\leq \rho \quad \textnormal{and} \quad \|\Delta u\|_{B(L^2;I)}\leq K.
$$
The space  $S_{\rho,K}$ will be equipped with the metric 
$$
d(u,v):=\|\Delta (u-v)\|_{B(L^2;I)}+\| u-v\|_{B(I)}.
$$
 
Applying the Sobolev embedding and estimate \eqref{EstimativaImportante}, it follows that
\begin{equation}\label{trii}
\begin{split}
\|G(u)\|_{B(I)}&\leq  \|e^{it\Delta^2}u_0\|_{B(I)}+c\left\|\Delta \int_0^t e^{i(t-s)\Delta^2} F(x,u)ds\right\|_{L_I^{\frac{2(N+4)}{N-4}}L_x^{\frac{2N(N+4)}{N^2+16}}}\\
&\leq  \|e^{it\Delta^2}u_0\|_{B(I)}+c\left\|\nabla F(x,u)\right\|_{L_I^{2}L_x^{\frac{2N}{N+2}}}.
\end{split}
\end{equation}
In addition, \eqref{EstimativaImportante} also gives
\begin{equation*}
\|\Delta G(u)\|_{B(L^2;I)}\lesssim \|\Delta u_0\|_{L^2}+ \|\nabla F(x,u)\|_{L^2_IL_x^{\frac{2N}{N+2}}}.
\end{equation*}
Now, by recalling from \eqref{SECONDEI} that
\begin{equation*}
|\nabla F (x,u)| \leq  |x|^{-b}|u|^{\alpha} |\nabla u| + |x|^{-b}|u|^{\alpha} ||x|^{-1} u|
\end{equation*}
and an application of Lemma \ref{LemCritico} leads to
\begin{eqnarray*}
\|\nabla F(x,u)\|_{L^2_IL_x^{\frac{2N}{N+2}}} &\leq & \|u\|^{\alpha-b}_{B(I)}\|\Delta u\|^{b+1}_{B(L^2;I)}.
\end{eqnarray*}
Thus,
\begin{equation*}
\begin{split}
\|G(u)\|_{B(I)}
&\leq \|e^{it\Delta^2}u_0\|_{B(I)} +c \|u\|^{\alpha-b}_{B(I)}\|\Delta u\|^{b+1}_{B(L^2;I)}\\
&\leq  \delta +c \rho^{\alpha-b}K^{b+1} 
\end{split}
\end{equation*}
and 
\begin{equation*}
\begin{split}
\|\Delta G(u)\|_{B(L^2;I)}&\leq  c\|u_0\|_{\dot{H}^2}+c\|u\|^{\alpha-b}_{B(I)}\|\Delta u\|^{b+1}_{B(L^2;I)}\\
&\leq  cA +c \rho^{\alpha-b}K^{b+1}.
\end{split}
\end{equation*}
By choosing $K= 2Ac$, $\rho$  small enough  so that $\max\{c\rho^{\alpha-b}K^b,c\rho^{\alpha-b-1}K^{b+1}\}<\frac{1}{4}$ and $\delta=\frac{\rho}{2}$ we obtain $\|\Delta G(u)\|_{B(L^2;I)}\leq K$ and $\| G(u)\|_{B(I)}\leq \rho$, which means to say   $G(u)\in S_{\rho,K}$.

 To complete the proof we need to show that $G$ is a contraction on $S_{\rho,K}$. Repeating the above computations, it follows that 
\begin{equation}\label{Contrction1}
d(G(u),G(v))
\leq c \|\nabla \left(F(x,u)- F(x,v)\right)\|_{L^2_IL^{\frac{2N}{N+2}}_x}.    
\end{equation}
 Applying \eqref{SECONDEI} (with $\alpha>1$) we have
 \[
\begin{split}
\left|\nabla \left(F(x,u)-F(x,v)\right)\right|& \lesssim  |x|^{-b}(|u|^{\alpha}+|v|^{\alpha})||x|^{-1}(u-v)|+|x|^{-b}|u|^\alpha|\nabla (u- v)|\\
& \quad +|x|^{-b}\left(|u|^{\alpha-1}+|v|^{\alpha-1}\right)|\nabla v||u-v|,
\end{split}
\]
Combining \eqref{Contrction1} with Lemma \ref{LemCritico}, it follows that 
\[
\begin{split}
d(G(u),G(v))&\leq c\left( \|\Delta u\|^b_{B(L^2;I)} \|u\|^{\alpha-b}_{B(I)}+\|\Delta v\|^b_{B(L^2;I)} \|v\|^{\alpha-b}_{B(I)}\right) \|\Delta (u-v)\|_{B(L^2;I)}\\
&\quad +c\left( \|\Delta u\|^b_{B(L^2;I)} \|u\|^{\alpha-b-1}_{B(I)}+\|\Delta v\|^b_{B(L^2;I)} \|v\|^{\alpha-b-1}_{B(I)}\right) \|\Delta v\|_{B(L^2;I)}\|u-v\|_{B(I)}.
\end{split}
\]
Therefore, if $u, v \in S_{\rho,K}$ then
$$
d(G(u),G(v))\;\leq 2c (K^b\rho^{\alpha-b}+K^{b+1}\rho^{\alpha-b-1})d(u,v).
$$
Since the choice of $\rho$ and $K$ ove) implies
$$2c (K^b\rho^{\alpha-b}+K^{b+1}\rho^{\alpha-b-1})<1,
$$ 
we deduce that $G$ is a contraction on $S_{\rho,K}$.
So, by the contraction mapping principle, $G$ has a unique fixed point $u\in S_{\rho,K}$, which completes the proof of the theorem.
\end{proof}

Finally, we prove our stability or also called long time perturbation result for the energy critical inhomogeneous nonlinear Schr\"odinger equation. To this end, we first show a short time perturbation result, which will be used in the proof of Theorem \ref{LTPC}.
\begin{lemma}\label{STP}{\bf (Short-time perturbation).} 
Assume that assumptions in Theorem \ref{GWPCH2} hold.  Let $I\subseteq \mathbb{R}$ be a time interval containing zero and let $\widetilde{u}$ be as in Theorem \ref{erroreq} but with \eqref{erroreq1} replaced by
\begin{equation*}
\sup_{t\in I}  \|\tilde{u}\|_{\dot{H}^2_x}\leq M \;\; \textnormal{and}\;\; \|\tilde{u}\|_{B(I)}\leq \varepsilon,
\end{equation*}
for some positive constant $M$ and some small $\varepsilon>0$. Suppose that $u_0\in \dot{H}^2(\mathbb{R}^N)$ satisfies \eqref{erroreq2} for some $M'$.

\indent There exists $\varepsilon_0=\varepsilon_0(M,M')>0$ such that if $\varepsilon<\varepsilon_0$, then there is a unique solution $u$ of \eqref{IBNLS} on $I\times \mathbb{R}^N$ such that
\begin{equation}\label{C} 
\|u-\widetilde{u}\|_{B(I)}\lesssim \varepsilon 
\end{equation}
and
\begin{equation}\label{C1}
\|u\|_{B(I)}+\|\Delta u\|_{B(L^2; I)}\lesssim 1,
\end{equation}
where the implicit constants depend on $M$ and $M'$.
\end{lemma}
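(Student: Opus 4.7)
The plan is to apply the contraction mapping principle to the equation for $w := u - \tilde{u}$. Writing $F(x,z) := |x|^{-b}|z|^\alpha z$ and subtracting \eqref{erroreq} from the equation \eqref{IBNLS} for $u$ yields $i\partial_t w + \Delta^2 w + \lambda(F(x,\tilde{u}+w) - F(x,\tilde{u})) = -e$, whence Duhamel gives
\begin{equation*}
\Phi(w)(t) := e^{it\Delta^2}(u_0-\tilde{u}_0) + i\int_0^t e^{i(t-s)\Delta^2}\bigl[\lambda(F(x,\tilde{u}+w) - F(x,\tilde{u})) + e\bigr](s)\,ds.
\end{equation*}
I would look for a fixed point of $\Phi$ in
\begin{equation*}
X := \bigl\{w : \|w\|_{B(I)} \leq A_1\varepsilon,\; \|\Delta w\|_{B(L^2;I)} \leq A_2\bigr\},
\end{equation*}
equipped with the metric $d(w_1,w_2) := \|w_1-w_2\|_{B(I)} + \|\Delta(w_1-w_2)\|_{B(L^2;I)}$, the constants $A_1, A_2$ being chosen in terms of $M, M'$ at the end.

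A preparatory bootstrap produces the a priori bound $\|\Delta \tilde{u}\|_{B(L^2;I)} \lesssim M$: estimate \eqref{EstimativaImportante} applied to the Duhamel representation of $\tilde{u}$, combined with Lemma \ref{LemCritico}(i) and (iii), gives
\begin{equation*}
\|\Delta \tilde{u}\|_{B(L^2;I)} \lesssim M + \|\Delta\tilde{u}\|_{B(L^2;I)}^{b+1}\|\tilde{u}\|_{B(I)}^{\alpha-b} + \varepsilon,
\end{equation*}
which closes by continuity once $\|\tilde{u}\|_{B(I)} \leq \varepsilon$ is small enough. The central step is then the $B(I)$-bound on $\Phi(w)$. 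Strichartz together with the Sobolev-embedding trick from \eqref{trii} gives
\begin{equation*}
\|\Phi(w)\|_{B(I)} \leq \varepsilon + c\bigl\|\nabla(F(x,\tilde{u}+w) - F(x,\tilde{u}))\bigr\|_{L^2_I L_x^{\frac{2N}{N+2}}} + c\varepsilon.
\end{equation*}
Because the hypothesis $b < (12-N)/(N-2)$ forces $\alpha - b > 1$ (in particular $\alpha > 1$), the pointwise estimate \eqref{SECONDEI} splits the nonlinear difference into three terms, each absorbed by one part of Lemma \ref{LemCritico}: $|x|^{-b-1}(|\tilde{u}+w|^{\alpha}+|\tilde{u}|^\alpha)|w|$ via part (iii), $|x|^{-b}|\tilde{u}+w|^\alpha |\nabla w|$ via part (i), and $|x|^{-b}(|\tilde{u}+w|^{\alpha-1}+|\tilde{u}|^{\alpha-1})|\nabla\tilde{u}||w|$ via part (ii). Plugging in $\|\tilde{u}\|_{B(I)}\leq\varepsilon$, the a priori bound on $\|\Delta\tilde{u}\|_{B(L^2;I)}$, and $w\in X$, the nonlinear term is controlled by $C(M+A_2)^{b}((A_1+1)\varepsilon)^{\alpha-b}(A_2 + MA_1\varepsilon)$.

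An analogous estimate using \eqref{EstimativaImportante} on $\Phi(w)$ directly gives
\begin{equation*}
\|\Delta \Phi(w)\|_{B(L^2;I)} \leq cM' + c\varepsilon + C(M+A_2)^{b}((A_1+1)\varepsilon)^{\alpha-b}(A_2+MA_1\varepsilon).
\end{equation*}
Setting $A_2 := 2cM'+1$ and $A_1$ suitably large, the factor $\varepsilon^{\alpha-b}$ (with $\alpha-b>1$) guarantees that for $\varepsilon \leq \varepsilon_0(M,M')$ sufficiently small, $\Phi(X)\subset X$. The same scheme, applied via \eqref{FEI}--\eqref{SECONDEI} to $F(x,\tilde{u}+w_1)-F(x,\tilde{u}+w_2)$, yields $d(\Phi(w_1),\Phi(w_2))\leq \tfrac{1}{2}d(w_1,w_2)$. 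The contraction mapping principle then produces a unique $w \in X$, and $u := \tilde{u}+w$ is the desired solution, with \eqref{C}--\eqref{C1} read off from the definition of $X$. I expect the main obstacle to be the bookkeeping in the three-way decomposition of \eqref{SECONDEI} so that each summand matches Lemma \ref{LemCritico} exactly, together with verifying that the strict inequality $\alpha - b > 1$ (equivalent to the hypothesis on $b$) is enough to simultaneously close the bootstrap on $\tilde{u}$, make $\Phi$ stabilize $X$, and produce the strict contraction factor.
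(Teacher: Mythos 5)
Your proposal is correct and follows essentially the same route as the paper's proof: write $u=\widetilde{u}+w$, first close a continuity/bootstrap bound $\|\Delta\widetilde{u}\|_{B(L^2;I)}\lesssim M$, then run a contraction for $w$ in a ball of size $(O(\varepsilon),O(M'))$ using the Sobolev-embedding trick of \eqref{trii}, the pointwise bound \eqref{SECONDEI}, and the three parts of Lemma \ref{LemCritico}, closing via $\alpha-b>1$. The only quibble is a harmless bookkeeping slip in your displayed bound for the third term of \eqref{SECONDEI} (it should carry $\varepsilon^{\alpha-1-b}\cdot\rho\cdot M^{b+1}\sim\varepsilon^{\alpha-b}$ rather than an extra factor of $\varepsilon$), which does not affect the argument since either form is $o(\varepsilon)$.
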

\begin{proof} 
Without loss of generality we may assume $0=\inf I$. First note by using $\|\widetilde{u}\|_{B(I)}\leq \varepsilon_0$, for some $\varepsilon_0>0$ enough small, \eqref{SE1} and \eqref{EstimativaImportante}, Lemma \ref{LemCritico} and a standard continuity argument we get $\|\Delta\widetilde{u}\|_{B(L^2;I)}\lesssim M.$

The idea is to obtain the solution $u$  as    $u=\widetilde{u}+w$, where $w$ is the solution of
\begin{equation}\label{IVPP} 
\begin{cases}
i\partial_tw +\Delta w + H(x,\widetilde{u},w)+e= 0,&  \\
w(0,x)= u_0(x)-\widetilde{u}_0(x),& 
\end{cases}
\end{equation}
with $H=H(x,\widetilde{u},w)=\lambda|x|^{-b} \left(|\widetilde{u}+w|^\alpha (\widetilde{u}+w)-|\widetilde{u}|^\alpha \widetilde{u}\right)$. So the main step is to show that \eqref{IVPP} has a solution. In order to apply the contraction mapping principle, let
\begin{equation*}
G (w)(t):=e^{it\Delta^2}w_0+i  \int_0^t e^{i(t-s)\Delta^2}(H(x,\widetilde{u},w)+e)(s)ds
\end{equation*}
and let $
B_{\rho,K}$ be the space of all functions $w$ defined on $I\times\mathbb{R}^N$ such that $\|w\|_{B(I)}\leq \rho$ and $\|\Delta w\|_{B(L^2;I)}\leq K$. Let us show that $G$ is a contraction on $B_{\rho,K}$.
As in Theorem \ref{GWPCH2} one has
\begin{equation}\label{ST1}
\begin{split}
\|G(w)\|_{B(I)}&\lesssim \varepsilon+ \| \nabla H\|_{L^2_IL_x^{\frac{2N}{N+2}}}+\|\nabla e\|_{L^2_IL_x^{\frac{2N}{N+2}}}
\end{split}
\end{equation}
and	
\begin{equation}\label{SP3}
\|\Delta G(w)\|_{B(L^2;I)}\lesssim  M'+ \| \nabla H\|_{L^2_IL_x^{\frac{2N}{N+2}}}+\|\nabla e\|_{L^2_IL_x^{\frac{2N}{N+2}}}.
\end{equation}	
Next we estimate $\|\nabla H\|_{L^2_IL_x^{\frac{2N}{N+2}}}$. From \eqref{SECONDEI} we obtain
\begin{equation*} 
\begin{split}
    |\nabla H|& \lesssim |x|^{-b}(|\widetilde{u}|^{\alpha}+|w|^{\alpha})|x|^{-1}|w|+|x|^{-b}(|\widetilde{u}|^\alpha+|w|^\alpha) |\nabla w|\\
    &\quad + |x|^{-b}\left(|\widetilde{u}|^{\alpha-1}+|w|^{\alpha-1}\right)|w||\nabla \widetilde{u}|.
\end{split}
\end{equation*}
Therefore, by using   Lemma \ref{LemCritico} we deduce
\[
\begin{split}
    \|\nabla H\|_{L^2_IL_x^{\frac{2N}{N-2}}}& \lesssim \left(\|\Delta \widetilde{u} \|^b_{B(L^2;I)}\| \widetilde{u} \|^{\alpha-b}_{B(I)} + \|\Delta w \|^b_{B(L^2;I)}\| w\|^{\alpha-b}_{B(I)} \right)\|\Delta w \|_{B(L^2;I)}\\
    &\quad +\left(\|\Delta \widetilde{u} \|^b_{B(L^2;I)} \| \widetilde{u} \|^{\alpha-1-b}_{B(I)} + \|\Delta w \|^b_{B(L^2;I)} \| w \|^{\alpha-1-b}_{B(I)} \right) \| w \|_{B(I)}  \|\Delta \widetilde{u} \|_{B(L^2;I)}.
\end{split}
\]
Gathering together the above estimates with our assumptions, it follows that for any $w\in B_{\rho,K}$,
\begin{align}\label{SP9}
\|\nabla H\|_{L^2_IL_x^{\frac{2N}{N+2}}} \lesssim \left(M^b\varepsilon^{\alpha-b}+K^b \rho^{\alpha-b}\right)K +\left( M^b \varepsilon^{\alpha-1-\theta} + K^b \rho^{\alpha-1-b} \right) \rho M.
\end{align}
Thus, by choosing $\rho=2c\varepsilon$ and $K=3cM'$, the relations \eqref{ST1}, \eqref{SP3} and \eqref{SP9} yield 
$$
\|G(w)\|_{B(I)}\leq  \tfrac{\rho}{2}+ c\left(M^bK+K^{b+1}+M^{b+1}+K^bM \right)\rho^{\alpha-b}=:\tfrac{\rho}{2}+ cA\rho^{\alpha-b},
$$
\begin{equation*}
\|\Delta G(w)\|_{B(L^2;I)}\leq \tfrac{K}{3}+c(M^b+K^b)\rho^{\alpha-b}K+c(\varepsilon+M^{b+1}+K^bM)\rho^{\alpha-b}.
\end{equation*}
Setting $\varepsilon_0$ sufficiently small such that 
$$
cA\rho^{\alpha-b-1}<\tfrac{1}{2},\;\;\;c(M^b+K^b)\rho^{\alpha-b}<\tfrac{1}{3}\;\;\textnormal{and}\;\;c(\varepsilon+M^{b+1}+K^bM)\rho^{\alpha-b}<\tfrac{K}{3},
$$
we obtain
\begin{equation*}
\|G(w)\|_{B(I)}\leq \rho\;\;\;\textnormal{and}\;\;\;\|\Delta G(w)\|_{B(L^2;I)}\leq K,
\end{equation*}
that is, $G$ is well defined on $B_{\rho,K}$. By using a similar argument we can also show that $G$ is a contraction. Thus, we have a unique solution $w$ on $I\times \mathbb{R}^N$ such that 
$$
\|w\|_{B(I)}\lesssim \varepsilon \;\;\;\textnormal{and}\;\;\;\|\Delta w\|_{B(L^2;I)} \lesssim 1,
$$ 
which it turn implies \eqref{C} and \eqref{C1}. This completes the proof of the theorem.
\end{proof}	

With Lemma \ref{STP} in hand we are able to show Theorem \ref{LTPC}.

\begin{proof}[\bf {Proof of Theorem \ref{LTPC}}] Without loss of generality we may assume $0=\inf I$. We first split the interval $I$ into $n = n(L,\varepsilon)$ intervals, say, $I_j = [t_j ,t_{j+1}]$ such that $\|\widetilde{u}\|_{B(I)}\leq \varepsilon$, where  $\varepsilon<\varepsilon_0(M,2M')$ and $\varepsilon_0$ is given in Lemma \ref{STP}; this is always possible because  $\|\widetilde{u}\|_{B(I)}\leq L$.

On each interval $I_j$ we have that
\begin{equation*}
w(t)=e^{i(t-t_j)\Delta}w(t_j)+i\int_{t_j}^{t}e^{i(t-s)\Delta}(H(x,\widetilde{u},w)+e)(s)ds,
  \end{equation*}
solves \eqref{IVPP} with initial data $w(t_j)=u(t_j)-\widetilde{u}(t_j)$. Assume for the moment that
\begin{equation}\label{LP3}
 \|e^{i(t-t_j)\Delta^2}(u(t_j)-\widetilde{u}(t_j))\|_{B(I_j)}\leq c(M,M',j)\varepsilon\leq \varepsilon_0
 \end{equation}
 and
 \begin{equation}\label{LP4}
 \|u(t_j)-\widetilde{u}(t_j)\|_{\dot{H}^2_x}\leq 2M'.
 \end{equation}
If $\varepsilon_1$ is small enough we may reiterate Lemma \ref{STP} to get  (for $\varepsilon<\varepsilon_1$), 
\begin{equation}\label{LP1}
\|u-\widetilde{u}\|_{B(I_j)}\lesssim \varepsilon
\end{equation}
and
\begin{equation}\label{LP2}
\|\Delta w\|_{B(L^2;I_j)}\lesssim 1.
\end{equation}
In view of \eqref{LP1} and \eqref{LP2}, a summation over all intervals $I_j$ yields the desired. 
 
In order to complete the proof, it suffices to establish \eqref{LP3} and \eqref{LP4}. As in \eqref{trii}, we obtain
\[
\begin{split}
 \|e^{i(t-t_j)\Delta^2}w(t_j)\|_{B(I_j)}&\lesssim \|e^{it\Delta}w_0\|_{B(I_j)}+\| \nabla H\|_{L^2_{[0,t_j]}L_x^{\frac{2N}{N+2}}}+\|\nabla e\|_{L^2_{I}L_x^{\frac{2N}{N+2}}}.
\end{split}
 \]
Recall from \eqref{SP9} and the choice of $\rho$ in Lemma \ref{STP} that $\|\nabla H\|_{L^2_{[0,t_j]}L_x^{\frac{2N}{N+2}}}\leq c(M,M')\varepsilon^{\alpha-b}$. Therefore, after an inductive argument, we infer
$$
\|e^{i(t-t_j)\Delta^2}(u(t_j)-\widetilde{u}(t_j))\|_{B(I_j)}\lesssim \varepsilon+\sum_{k=0}^{j-1}c(M,M',k)\varepsilon^{\alpha-b}.
$$
 In a similar fashion,
\[
\begin{split}
\|u(t_j)-\widetilde{u}(t_j)\|_{\dot{H}^2_x}&\lesssim  
M'+\sum_{k=0}^{j-1}C(k,M,M')\varepsilon^{\alpha-\theta}+\varepsilon.
\end{split}
\]
Consequently, \eqref{LP3} and \eqref{LP4} hold provided  $\varepsilon_1$ is chosen to be small enough. The proof of the theorem is thus completed. 
\end{proof}

\section*{Acknowledgments} 
A.P. was partially supported by  CNPq/Brazil grant 303762/2019-5 and FAPESP/Brazil grant 2019/02512-5.

\bibliographystyle{abbrv}
\bibliography{bibguzman}	

\end{document}